\newtheorem{theorem}{Theorem}[section]
\newtheorem{lemma}[theorem]{Lemma}
\newtheorem*{claim*}{Claim}
\newtheorem{conjecture}[theorem]{Conjecture}
\theoremstyle{definition}
\newtheorem*{definition*}{Definition}
\newcommand{\U}{\mathcal{U}}
\newcommand{\Y}{\mathcal{Y}}
\newcommand{\eps}{\varepsilon}
\newcommand{\Bin}{\mathrm{Bin}}
\renewcommand{\Pr}{\mathbb{P}}
\newcommand{\ceil}[1]{
    \left\lceil #1 \right\rceil
}
\newcommand{\floor}[1]{
    \left\lfloor #1 \right\rfloor
}
\title{Topological Minors in Typical Lifts}
\author
{
Matija Buci\'{c}
}
\author 
{
Micha Christoph
}
\author
{
Alp Müyesser 
}
\thanks{The third author was supported by the Cecil King Travel Scholarship of the Cecil King Foundation and the London Mathematical Society. The second and fourth authors were supported by the Swiss National Science Foundation Ambizione Grant No. 216071.}
\author
{
Raphael Steiner
}
\address[Buci\'{c}]{School of Mathematics, Institute for Advanced Study and Department of Mathematics, Princeton University, Princeton, USA.}
\email{\tt mb5225@princeton.edu}
\address[Müyesser]{Department of Mathematics, University College London, UK.}
\email{\tt alp.muyesser.21@ucl.ac.uk}
\address[Christoph, Steiner]{Department of Computer Science, Institute of Theoretical Computer Science, ETH Z\"{u}rich, Switzerland.}
\email{\tt $\{$micha.christoph, raphaelmario.steiner$\}$@inf.ethz.ch}
\date{}
\begin{document}

\begin{abstract}
    \setlength{\parskip}{\smallskipamount}
    \setlength{\parindent}{0pt}
    \noindent
    An $\ell$-lift of a graph $G$ is any graph obtained by replacing every vertex of $G$ with an independent set of size $\ell$, and connecting every pair of two such independent sets that correspond to an edge in $G$ by a matching of size $\ell$. Graph lifts have found numerous interesting applications and connections to a variety of areas over the years. Of particular importance is the random graph model obtained by considering an $\ell$-lift of a graph sampled uniformly at random. This model was first introduced by Amit and Linial in 1999, and has been extensively investigated since. In this paper, we study the size of the largest  topological clique in random lifts of complete graphs. 
    
   \par  In 2006, Drier and Linial raised the conjecture that almost all $\ell$-lifts of the complete graph on $n$ vertices contain a subdivision of a clique of order $\Omega(n)$ as a subgraph provided $\ell$ is at least linear in $n$. We confirm their conjecture in a strong form by showing that for $\ell \ge (1+o(1))n$, one can almost surely find a subdivision of a clique of order~$n$. We prove that this is tight by showing that for $\ell \le (1-o(1))n$, almost all $\ell$-lifts do not contain subdivisions of cliques of order $n$.
   
   \par Finally, for $2 \le \ell \ll n$, we show that almost all $\ell$-lifts of $K_n$ contain a subdivision of a clique on $(1-o(1))\sqrt{\frac{2n \ell}{1-1/\ell}}$ vertices and that this is tight up to the lower order term.
 \end{abstract}

\maketitle

\section{Introduction}
Instances of the following classical meta-question are encountered frequently across combinatorics. Given a graph $G$, how can we generate a larger graph which inherits to some extent the structure of $G$? Perhaps the most natural way is to replace each vertex $v$ of $G$ with a collection of $\ell$ vertices $G_v$ and then ``join'', for any edge $uv$ of $G$, the sets $G_u$ and $G_v$. There is a variety of ways in which the ``join'' operation can be performed. For example, one can place a complete bipartite graph between $G_u$ and $G_v$ for every adjacent $u$ and $v$, in which case one obtains the classical notion of an $\ell$-blow-up of $G$. While blow-ups proved very useful for a variety of problems over the years they have a downside in that they substantially increase the degree as we take $\ell$ to grow, making them less suitable for constructing sparser graphs. On the other side of the spectrum, one might instead place a perfect matching between any $G_u$ and $G_v$ for every edge $uv$ of $G$. This recovers another classical notion, that of an \emph{$\ell$-lift} of a graph.

The initial motivation for the study of lifts of graphs came from topology. The reason behind this is that an equivalent way of defining a lift of a graph $G$ is as any graph that can be mapped to $G$ via a covering map, a well-studied and quite widely useful object in topology. Since in this paper we focus on the graph theoretic and probabilistic point of view, we point an interested reader to \cite{linial1999} for precise definitions and more details on this connection. One piece of notation we do inherit from the topological point of view is in that we refer to the set $G_v$ as the \emph{fiber} of $v$.

Lifts have found numerous interesting applications across mathematics and computer science. For some examples in theoretical computer science, coding theory, cryptography, quantum information theory, and distributed computing see e.g.\ \cite{bilu-linial, eigenvalues, distributed,unique-games, quantum,coding, expander-survey,brailovskaya2022universality,near-ramanujan}. Let us highlight that they were instrumental in the recent celebrated work of Marcus, Spielman, and Srivastava \cite{interlacing} showing the existence of bipartite Ramanujan graphs of any degree.

A key question that arises is how should one choose the matchings between fibers. One of the most common (and used in many aforementioned applications) ways is to choose the matching uniformly at random between any pair of fibers corresponding to adjacent vertices. This gives rise to a very natural and well-studied model for generating random graphs. It was first explicitly introduced by Amit and Linial in 1999 \cite{linial1999}. One of their main motivations was the fact that, compared to the usual binomial model of random graphs, one gets much more control over the structure of the random graph we are generating. 
\par We note that the binomial random graph model found numerous applications in extremal combinatorics in large part due to its simplicity which eases its analysis. However, the simplicity behind the binomial random graph introduces certain limitations. The random lift model allows more flexibility while still being relatively simple to analyze. There has been plenty of work towards understanding the properties of random graphs generated this way 
(see \cite{linial1999,linial2006minors,eigenvalues,edge-expansion,alpha-chi,PMs,spectrum-random-lifts,HCs, HCs2,bernshteyn2023dp,nir2021chromatic,witkowski2013,PMs2} for just some examples) over the last 25 years. Investigating the properties of random lifts of complete graphs is an important instance which attracted a lot of attention over the years, 
not least because such random lifts correspond in some sense most closely to a binomial random graph. On the other hand, the two models often exhibit quite different behavior, and there remain several fundamental open problems regarding the typical properties of lifts of complete graphs. Following \cite{linial1999}, we say that almost all $\ell$-lifts of $K_n$ satisfy a property if all but a vanishing proportion of them do, as $n\to \infty$.

In this paper, we study the size of the largest topological cliques one can typically find in an $\ell$-lift of $K_n$. Here, a \emph{topological clique} of order $m$ is a graph consisting of $m$ \emph{branch} vertices all pairs of which are joined by internally vertex-disjoint paths. The largest order of a topological clique one can find as a subgraph of a graph $G$ is called its \emph{Haj\'os number}. The Haj\'os number is a well-studied graph parameter with a long history dating back to the 1940's (see \cite{paul-survey}) when Haj\'os made his famous conjecture that the Haj\'os number of a graph is always at least as large as its chromatic number. This conjecture has been disproved by Catlin \cite{catlin} in 1979. That the difference can be quite substantial was shown by Erd\H{o}s and Fajtlowicz \cite{Hajos-random-graphs} who proved that with high probability the binomial random graph $\mathcal{G}(n,1/2)$ has Haj\'os number $\Theta(\sqrt{n})$ (as opposed to chromatic number which is well-known to be $\Theta(n/\log n)$, see e.g.\ \cite{chi-Gnp}). Bollob\'as and Catlin \cite{bollobas-catlin} even determined the correct leading constant and extended the result to arbitrary constant density $p$. In 1979 Ajtai, Koml\'os, and Szemer\'edi \cite{ajtai-komlos-szemeredi} determined the behavior even when $p \to 0$. 

We note that the Haj\'os number has been investigated for a number of other classes of graphs besides random ones. Perhaps the most spectacular result in this direction is one due to K\"uhn and Osthus \cite{kuhn-osthus-girth} from 2002 showing that any graph with minimum degree $d$ and girth at least $186$ has Haj\'os number equal to $d+1$ (see also \cite{improved-kuhn-osthus} for an improved bound on the girth). Turning to more recent results, Liu and Montgomery \cite{mader-resolution} show that for any $s,t \ge 2$ any $K_{s,t}$-free graph of average degree $d$ will contain a topological clique of order $\Omega(d^{\frac12 \cdot \frac{s}{s-1}})$. Already the case $s=t=2$ of their result settled a well-known conjecture of Mader \cite{mader} and showed that $C_4$-free graphs have Haj\'os number of at least $\Omega(d)$. Let us also highlight a very nice recent work of Dragani\'{c}, Krivelevich, and Nenadov \cite{draganic2022rolling} on finding large topological cliques in expander graphs. 
We point an interested reader to \cite{hajos-summary,crux} for excellent summaries of what is known about the Haj\'os number and to \cite{thomassen} for some interesting connections to other topics. 

In the setting of random lifts, the problem of determining the Haj\'os number was first considered by Drier and Linial \cite{linial2006minors} in 2006.
They showed that almost every $\ell$-lift of $K_n$ has a topological clique of order at least $(1-o(1))\frac{\ell}3$ provided $\ell < (1-o(1))\frac{n}{2}$. Combined with the immediate upper bound of $n$ (maximum degree in a topological clique can not exceed that of a graph containing it), this tells us that when $\ell$ is linear in $n$ and at most about $n/2$, the Haj\'os number of almost all $\ell$-lifts of $K_n$ is $\Theta(n)$. Since their result does not inform the case when $\ell \ge n/2$, Drier and Linial naturally asked what happens for larger $\ell$. In particular, they posed the following conjecture in this direction.

\begin{conjecture}[\cite{linial2006minors}]\label{conj:main}
    For $\ell \ge \Omega(n)$ almost all $\ell$-lifts of $K_n$ have Haj\'os number equal to $\Theta(n)$.
\end{conjecture}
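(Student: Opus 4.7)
The plan is to confirm the conjecture by showing that if $\ell \ge c_0 n$ for some constant $c_0 > 0$, then a random $\ell$-lift of $K_n$ almost surely contains a subdivision of $K_{cn}$ for some constant $c = c(c_0) > 0$. The matching upper bound is immediate, since any $\ell$-lift of $K_n$ is $(n-1)$-regular, forcing its Haj\'os number to be at most $n$. Hence only the lower bound requires an argument.

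First, I would pick $m = cn$ fibers for a suitably small $c > 0$ and select one branch vertex $x_k$ from each, then attempt to route internally vertex-disjoint paths between all pairs $(x_i, x_j)$. The key structural input is that each vertex has exactly one neighbor in every other fiber, determined by a uniformly random bijection between the fibers. For fixed $i \ne j$, the expected number of length-$2$ paths $x_i \to y \to x_j$ through any specific intermediate fiber is $1/\ell$, while the expected number of length-$3$ paths through any specific ordered pair of distinct intermediate fibers is $1/\ell^2$. Summing gives $\Theta(n/\ell) = \Theta(1)$ length-$2$ paths and $\Theta(n^2/\ell) = \Theta(n)$ length-$3$ paths between each pair of branch vertices. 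The length-$3$ abundance is critical, since we need $\binom{m}{2} = \Theta(n^2)$ disjoint paths competing for $n\ell = \Theta(n^2)$ available internal vertices, which forces the average path length to be $O(1)$.

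Given this abundance, I would construct the paths iteratively: at each step process the next pair and select a length-$\le 3$ path avoiding the vertices already used. To guarantee that a suitable path always exists, one needs a robust version of the path count above, asserting that for every pair and every sparse enough forbidden set $U$ a length-$\le 3$ path from $x_i$ to $x_j$ avoiding $U$ still exists. This should follow from a second-moment concentration on the random matchings defining the lift, combined with standard union-bound expansion estimates for typical lifts. The principal obstacle will be precisely this robustness: since the total path length and the total available vertex count are of the same order, the argument has very little slack, and one must carefully track how the forbidden set grows across (and possibly within) the fibers to keep the path counts positive throughout the construction. A global Hall-type matching between pairs and short-path candidates would be an alternative way to organise the argument, though it appears to require essentially the same robustness input.
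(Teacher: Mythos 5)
Your high-level plan is sound: the upper bound on the Haj\'os number is indeed immediate from $(n-1)$-regularity, and the lower bound does come down to greedily routing $\binom{m}{2}$ internally-disjoint short paths between $m=\Theta(n)$ branch vertices. Your length counting (expected $\Theta(n/\ell)$ length-2 paths and $\Theta(n^2/\ell)$ length-3 paths per pair, with $O(1)$ average path length forced by the vertex budget) is also correct. However, the proposal has a genuine gap in precisely the step you flag as the ``principal obstacle'', and the way you propose to close it would not work.

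The problem is that the forbidden set $U$ of already-used vertices is \emph{adaptive}: it depends on the random lift through the paths chosen so far, and a union bound over all possible sparse $U$ together with all $\binom{m}{2}$ pairs is hopeless because there are exponentially many candidate sets. A ``second-moment concentration on the random matchings'' gives failure probability polynomial in $n$ per (pair, forbidden-set) instance, which is nowhere near small enough to beat this union. In fact, with one branch vertex per fiber and constant expected number of length-2 paths per pair, a constant fraction of pairs will have \emph{no} length-2 path even before anything is forbidden, so the argument is forced onto length-3 paths from the start, and a naive greedy choice can and will destroy too many of the $O(n)$ length-3 candidates for some late pair.

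The paper resolves this by invoking Montgomery's extendability machinery (the $(D,m)$-extendable subgraphs and the connecting lemma, stated as Lemma~\ref{lemma:connecting}): each connecting path is chosen so that the union of what has been embedded so far remains extendable, so the expansion needed to find the next path is preserved \emph{deterministically} rather than reargued probabilistically. That is the ingredient your proposal is missing. The paper's construction for linear $\ell$ is also different in geometry from yours: rather than one branch vertex per fiber, it places all $n$ branch vertices inside a single fiber and uses their (pairwise disjoint) neighborhoods $V_1,\dots,V_n$ as transversals; Lemma~\ref{lem:winningconditions}\ref{pseudorandomstars} then finds a single matching inside $\bigcup V_i$ connecting all but $o(n^2)$ pairs by paths of length one, and only the remaining $o(n^2)$ pairs are handled with the (length-$\Theta(1)$) connecting lemma. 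Your one-vertex-per-fiber choice is closer to what the paper does in the sublinear-$\ell$ regime (Theorem~\ref{thm:upper-bnd-small-ell-proof}), where length-2 paths dominate. For the purposes of the conjecture alone (order $\Theta(n)$ rather than exactly $n$) either geometry could plausibly be made to work, but in both cases some form of the extendability/connecting apparatus, not a second-moment argument, is what makes the greedy path-embedding go through.
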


In 2013, Witkowski \cite{witkowski2013} proved that for $\ell$ large enough the conjecture is true. In fact, he proved the Haj\'os number is precisely $n$ for sufficiently large $\ell$. His argument, however, requires $\ell$ to be exponential in $n$. At a cost of precision, several of the aforementioned recent results on the Haj\'os number of various classes of graphs can be used to reduce the requirement on the size of $\ell$. Combining well-known results on the spectrum of random lifts, establishing the fact that they are excellent expanders (see e.g.\ \cite{spectrum-random-lifts}), with the work of Dragani\'{c}, Krivelevich, and Nenadov \cite{draganic2022rolling} on finding large topological cliques in expander graphs (see \Cref{sec:conc-remarks} for more details on this result) one can prove that \Cref{conj:main} holds if $\ell \ge n^{O(1)},$ and they in fact find topological cliques of order $(1-o(1))n$. Using the result of Liu and Montgomery \cite{mader-resolution} on $C_4$-free graphs\footnote{A typical lift in the relevant regime here will not be $C_4$-free but will only have a few $C_4$'s which can be removed by a standard application of the alteration method without hurting the average degree too much, provided $\ell\gg n^2$.}
one can prove the conjecture already for $\ell \ge O(n^2),$ although their arguments provide only a topological clique of order $\Omega(n)$. \par We completely settle the above conjecture of Drier and Linial in a very strong form.

\begin{theorem}\label{thm:maintheorem-intro}
For $\ell\geq (1+o(1))n$ almost all $\ell$-lifts of $K_n$ have Haj\'os number equal to $n$.
\end{theorem}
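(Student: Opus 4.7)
The plan is to construct an explicit subdivision of $K_n$ in a typical $\ell$-lift $L$ of $K_n$, placing exactly one branch vertex in each fiber. Since $L$ is $(n-1)$-regular and the branch vertices of a $K_n$-topological-minor have degree $\geq n-1$, this is essentially forced: every edge incident to a branch vertex must be used as the first edge of one of the $n-1$ connecting paths. The idea is therefore to connect pairs of branch vertices by paths of length $3$, which pairs a neighbor of one endpoint with a neighbor of the other, using the slack $\ell = (1+o(1))n$ to achieve vertex-disjointness.

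By the symmetry of the model I may fix the branch vertices $v_i := v_i^{(1)} \in G_i$ and let $w_{i,k} \in G_k$ denote the matching partner of $v_i$ in fiber $G_k$. A length-$3$ path connecting $v_i$ and $v_j$ then has the form $v_i - w_{i,k_1} - w_{j,k_2} - v_j$, and is valid if and only if the random matching between $G_{k_1}$ and $G_{k_2}$ pairs $w_{i,k_1}$ with $w_{j,k_2}$. This occurs with probability $1/\ell$, and a straightforward Chernoff computation (together with a union bound over pairs) gives that with high probability every pair $\{i,j\}$ admits $(1\pm o(1))(n-2)(n-3)/\ell = \Omega(n)$ such length-$3$ candidates. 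The same calculation shows that for each ``neighbor slot'' $(i,k)$, the number of valid candidate paths using $w_{i,k}$ as an internal vertex is $(1\pm o(1))(n-1)^2/\ell = \Theta(n)$.

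Second, I encode the path-selection problem as a perfect matching problem in an auxiliary $3$-uniform hypergraph $\mathcal{H}$. Its vertices are the ``pair slots'' $P := \binom{[n]}{2}$ together with the ``neighbor slots'' $N := \{(i,k) : i \neq k\}$. For each valid length-$3$ candidate as above I add the triple $\{\{i,j\}, (i,k_1), (j,k_2)\}$. A perfect matching in $\mathcal{H}$ covering all of $P$ and $N$ is exactly a system of internally vertex-disjoint length-$3$ paths connecting every pair of branch vertices, i.e.\ the desired $K_n$-subdivision. The counts above together with bounds on codegrees (two vertices of $\mathcal{H}$ jointly lie in $O(1)$ triples, proved via a second-moment computation using the independence of matchings between distinct fiber pairs) show that $\mathcal{H}$ is essentially $(d,o(d))$-regular with $d = \Theta(n)$.

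Third, I find the perfect matching via a nibble-plus-absorption strategy. Reserve a small ``absorbing'' structure on a random subset of $\varepsilon n$ vertices per fiber, made possible by the slack $\ell \ge (1+o(1))n$, which can cover any small set of leftover pair slots with internally disjoint short paths. On the remainder I run a Rödl/Pippenger–Spencer-type semi-random nibble in $\mathcal{H}$ to obtain a matching covering all but an $o(1)$-fraction of pair slots, then finish by absorbing. The main obstacle is the final absorbing stage: because every branch vertex has exactly $n-1$ incident edges, each neighbor slot must be used exactly once, so I need to design absorbers that simultaneously plug in a bounded number of remaining pair slots while respecting this rigid degree constraint. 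I expect to handle this by building the absorbers first—as small ``switching gadgets'' that can reroute a length-$3$ path through alternative intermediate fibers using the reserved slack—so that after the nibble a Hall-type condition (or a second application of a matching lemma to a small residual hypergraph that remains near-regular) can be verified with high probability.
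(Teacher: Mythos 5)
Your proposal has the right high-level instinct—connect branch vertices by short paths and exploit the slack $\ell\ge(1+\varepsilon)n$—but the choice to place \emph{one branch vertex per fiber} introduces structural obstructions that you do not address and that in fact block the approach as written.

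\textbf{Collisions among forced interior vertices.} You correctly observe that, since $G$ is $(n-1)$-regular, every edge at a branch vertex must be used, so for each $i$ and each $k\ne i$ the vertex $w_{i,k}$ is a forced ``first interior vertex.'' The problem is that with one branch vertex per fiber these forced vertices are not distinct: for $i\ne j$ (both $\ne k$), the event $w_{i,k}=w_{j,k}$ has probability $1/\ell$, so with $\ell=(1+o(1))n$ you expect $\Theta(n^2)$ two-way collisions and $\Theta(n^2)$ three-way collisions $w_{i,k}=w_{j,k}=w_{j',k}$. A three-way collision is fatal for your fixed transversal: that single vertex would have to carry three branch-vertex edges while having degree two as an interior vertex of a subdivision, so no $K_n$-subdivision with these branch vertices exists. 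Two-way collisions force paths of length $2$, and edges $v_iv_j$ (also present w.h.p.) force paths of length $1$; neither appears in your $3$-uniform hypergraph $\mathcal{H}$, whose hyperedges all encode length-$3$ paths. Consequently $\mathcal{H}$ has \emph{no} perfect matching w.h.p., independently of any nibble or absorption. Choosing the branch transversal adaptively after revealing the lift does not obviously help, since you would need to simultaneously avoid all three-way collisions in every fiber, which is a nontrivial constraint-satisfaction problem you haven't addressed. The paper sidesteps this entirely by putting \emph{all} branch vertices into a single fiber $W$: their neighborhoods $V_1,\ldots,V_n$ are then automatically pairwise-disjoint transversals of $G-W$, with no collisions and no branch–branch edges.

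\textbf{The absorption step is the crux and is left open.} You flag this yourself, and I agree it is a genuine obstacle: the rigid constraint that every neighbor slot be used exactly once means you cannot simply discard a small residual. The ``switching gadget'' idea might be developable, but as stated it is not a proof. The paper avoids absorption altogether: it first wins most pairs with a single maximal-matching argument between the disjoint transversals (\Cref{lem: contains matching}/\ref{pseudorandomstars}), and then connects the $O(\gamma n^2)$ leftover pairs using Montgomery's extendability lemma (\Cref{lemma:connecting}), which permits paths of bounded length $>3$ and is exactly what the slack $\varepsilon n^2$ is there to accommodate. Nothing here needs a Rödl nibble or a hypergraph perfect matching.

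In short: the collision issue is a hard gap (not a technicality—w.h.p. your fixed hypergraph has no perfect matching), and the absorption step is an acknowledged but unresolved gap. Compare with the paper's much lighter route: single-fiber branch vertices give disjoint transversal neighborhoods for free, a greedy maximal matching handles almost all pairs with length-$3$ paths, and the extendability method cleans up the remainder.
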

We note that besides clearly settling \Cref{conj:main} for $\ell \ge (1+o(1))n$, Theorem~\ref{thm:maintheorem-intro} also settles \Cref{conj:main} for any $\ell\ge \Omega(n)$ as one can simply restrict attention to only a subset of $(1-o(1))\ell$ of the fibers and by \Cref{thm:maintheorem-intro} find a topological clique of order $(1-o(1))\ell\ge \Omega(n)$ among these fibers.

\par Furthermore, \Cref{thm:maintheorem-intro} actually determines the ``threshold'' in terms of how large $\ell$ we need to take for the Haj\'os number of a typical $\ell$-lift of $K_n$ to become equal to $n$.

\begin{theorem}\label{thm:lower_bound-intro}
    For $\ell\leq (1-o(1))n$ almost all $\ell$-lifts of $K_n$ do not contain a topological clique of order $n$.
\end{theorem}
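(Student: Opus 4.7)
The plan is a first-moment argument: I would bound the expected number $\mathbb{E}[X]$ of topological $K_n$s contained in a random $\ell$-lift $L$ of $K_n$ and show $\mathbb{E}[X]=o(1)$, from which Markov's inequality yields the conclusion.

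The key starting observation is that $L$ is $(n-1)$-regular: every vertex has exactly one neighbor in each of the $n-1$ other fibers. Consequently, if $L$ contained a topological $K_n$ with branch vertices $b_1,\dots,b_n$, every incident edge at each branch $b_i$ would be used as the first step of one of the $n-1$ paths $P_{ij}$, inducing a bijection $\psi_i$ between the $n-1$ neighbors of $b_i$ and the $n-1$ other branches. Moreover, since vertices in the same fiber of $L$ are non-adjacent, any two branches lying in the same fiber must be joined by a path of length $\geq 2$. I would enumerate potential topological $K_n$ substructures $T$ by specifying the branch-vertex locations, the sequence of fibers visited by each path, and the specific internal vertices chosen in each fiber; for such a $T$ with $m$ total internal vertices (hence $\binom{n}{2}+m$ edges), the probability $T\subseteq L$ is at most $\ell^{-\binom{n}{2}-m}$, since each edge between two fibers appears in the independent random matching between them with probability $1/\ell$.

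The main obstacle is that naive per-path enumeration yields contributions of order $(n-1)^r/\ell$ per path of length $r$, which does not converge as $r$ grows when $\ell$ is close to $n$. To make the argument succeed, one must use two global constraints ignored by the naive count: the pairwise internal vertex-disjointness, which forces the $\binom{n}{2}$ paths to share a pool of at most $n(\ell-1)$ internal vertices (and hence most paths to be short), and the global consistency of the bijections $\psi_i$, which requires that the first-step assignment from $b_i$ to $b_j$ coincide with the last-step assignment from $b_j$ to $b_i$. I would decompose the expectation according to the ``fiber profile'' $(t_k,m_k)_{k\in [n]}$ (the number of branch and internal vertices in each fiber) and, for each profile, carefully bound the number of configurations via a combination of direct enumeration and entropy-style bounds on the valid bijection systems $(\psi_i)$. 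The savings from disjointness and from bijection consistency should combine to push the effective per-edge cost above $1/\ell$, giving $\mathbb{E}[X]=o(1)$ precisely in the regime $\ell\leq (1-o(1))n$ and thereby matching the complementary upper bound \Cref{thm:maintheorem-intro}.
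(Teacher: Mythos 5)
Your plan is a first-moment bound over all labeled topological-$K_n$ configurations, which is a genuinely different route from the paper's, and it cannot succeed when $\ell$ is close to $n$: the quantity $\mathbb{E}[X]$ you propose to bound does not tend to zero there, so no amount of more careful bookkeeping will show that it does. Take $\ell=(1-\varepsilon)n$ with a fixed $\varepsilon<1/2$, and consider configurations with $m$ close to $K:=n(\ell-1)$ internal vertices, so that the average path length is close to $3$. Even after fully building in internal vertex-disjointness — replacing the naive $(n\ell)^m$ by $(K)_m\approx K!$ at $m=K$ — the contribution to $\mathbb{E}[X]$ from this slice of configurations is, up to $e^{O(n^2)}$ factors,
\begin{equation*}
(n\ell)^n\cdot\binom{m+\binom{n}{2}}{\binom{n}{2}}\cdot K!\cdot\ell^{-\binom{n}{2}-K}\;\approx\;\left(\frac{K}{\ell}\right)^{K}\ell^{-\binom{n}{2}}\;\approx\;n^{\,K-\binom{n}{2}}\;=\;n^{(1/2-\varepsilon)n^2+O(n)},
\end{equation*}
which diverges for every fixed $\varepsilon<1/2$. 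Your second constraint — the induced bijections $\psi_i$ forcing the $n-1$ first steps out of each $b_i$ into $n-1$ distinct fibers — buys at most a factor $\prod_{i=1}^{n}(n-1)!/(n-1)^{n-1}=e^{-O(n^2)}$, and the disjointness saving (already incorporated above) is at most $e^{-O(n\ell)}$. Both are of the form $e^{-O(n^2)}=n^{-o(n^2)}$, nowhere near the $n^{\Theta(n^2)}$ factor you would need to recover. This is a situation where the first moment is large yet the property still holds with high probability, because the rare lifts that do contain a topological $K_n$ contain astronomically many of them; a union bound over all embedded subdivisions is therefore hopeless.

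The paper's proof of \Cref{thm:lower_bound-proof} sidesteps this with a local structural reduction. Because the lift is exactly $(n-1)$-regular, every edge at a branch vertex is forced into the subdivision; so if some pair $u,v$ in a candidate branch set $X$ has two common neighbors $a,b\notin X$, both $uav$ and $ubv$ would have to be connecting paths, leaving $u$ only $n-3$ remaining edges for its $n-2$ other targets — a contradiction. It then suffices to show that with high probability \emph{every} $n$-set $X$ contains such a pair, and a careful multi-round edge-exposure argument (using \Cref{obs:stepwise revealing} together with the anti-concentration \Cref{lem: constant prob}) gives a per-set failure probability of at most $n^{-3n}$, which comfortably beats the at most $n^{2n}$ choices of $X$. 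Your opening observation about regularity and the forced bijections $\psi_i$ is exactly the right structural starting point — the fix is to union-bound over candidate branch sets $X$ (only $n^{O(n)}$ of them), not over full embedded subdivisions (of which there are $n^{\Theta(n^2)}$).
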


These results settle the behavior of the Haj\'os number of a typical $\ell$-lift of $K_n$ for $\ell\ge \Omega(n)$. It is natural to ask what happens for smaller values of $\ell$. Here the best-known lower bound for very small values of $\ell$ was $\Omega(\sqrt{n})$ by the classical result of Koml\'os and Szemer\'edi \cite{komlos-szemeredi} and Bollob\'as and Thomason \cite{bollobas-thomason} which guarantees a topological clique of order $\Omega(\sqrt{d})$ in any graph with average degree at least $d$ (see also \cite{crux} for a conditional improvement in terms of a ``crux'' of a graph). For somewhat larger $\ell \le \frac n2$, the above mentioned lower bound of $\Omega(\ell)$  starts to dominate. On the other hand, the best known upper bound was $O(\sqrt{n\ell})$ due to Drier and Linial \cite{linial2006minors} showing that at the two extremes, when $\ell$ is a constant and when $\ell$ is linear both these lower bounds can be tight up to a constant factor. We determine the answer up to lower-order terms for any sublinear $\ell$.

\begin{theorem}\label{thm:smalll}
    For $2\le \ell \ll n$, almost all $\ell$-lifts of $K_n$ have Haj\'os number equal to $(1+o(1))\sqrt{\frac{2n\ell}{1-1/\ell}}$.
\end{theorem}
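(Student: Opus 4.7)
\textbf{Proof plan for Theorem \ref{thm:smalll}.} Let $m^{\ast}:=\sqrt{2n\ell/(1-1/\ell)}$ and let $H$ denote a uniformly random $\ell$-lift of $K_n$. I would prove the matching upper and lower bounds on the Haj\'os number of $H$ separately.

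\emph{Upper bound.} The starting observation is purely structural: in any lift, two vertices in the same fiber have \emph{no} common neighbor, since each cross-fiber bipartite graph is a perfect matching. Hence in any topological clique of $H$ with branch set $S=\{v_1,\ldots,v_m\}$, every pair of same-fiber branch vertices consumes at least two internal vertices, and every pair of distinct-fiber branch vertices without a direct matching edge consumes at least one. Counting against $|V(H)|=n\ell$ yields the deterministic inequality
\[
    \binom{m}{2} - D_S + S_{\mathrm{sf}} \le n\ell - m,
\]
where $D_S$ is the number of edges of $H$ induced by $S$ and $S_{\mathrm{sf}}$ the number of same-fiber pairs in $S$. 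I would then prove that with high probability, every $m$-subset $S$ of $V(H)$ satisfies $D_S \le (1+o(1))\binom{m}{2}/\ell$. Writing $m_a=|S\cap F_a|$, the variable $D_S$ splits across the $\binom{n}{2}$ independent random fiber-pair matchings, has expectation $\sum_{a<b}m_am_b/\ell \le \binom{m}{2}/\ell$, and is exponentially concentrated by a Chernoff-type inequality for sums of independent permutation-hypergeometric variables; a union bound over $\binom{n\ell}{m}$ subsets then applies. Plugging back, and using $S_{\mathrm{sf}}\ge 0$, gives $\binom{m}{2}(1-1/\ell) \le n\ell(1+o(1))$, hence $m \le (1+o(1))m^{\ast}$.

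\emph{Lower bound.} Fix $\varepsilon>0$ and set $m=(1-\varepsilon)m^{\ast}$; since $\ell\ll n$, $m\ll n$, so I can pick branch vertices $v_1,\ldots,v_m$ in $m$ distinct fibers, killing all same-fiber pairs. With high probability $(1+o(1))\binom{m}{2}/\ell$ pairs are directly connected by matching edges; use them as length-$1$ paths. The remaining $\approx(1-\varepsilon)^2 n\ell$ pairs must be joined through internal vertices, of which exactly $n\ell-m\approx n\ell$ are available --- a slack of only about $2\varepsilon\cdot n\ell$. The plan is to handle the overwhelming majority of these pairs by length-$2$ paths, which requires picking, for each such pair, a distinct common neighbor of its two branch vertices. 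I encode this as a bipartite matching problem in the auxiliary graph $\mathcal G$ whose left vertices are the uncovered pairs, whose right vertices are $V(H)\setminus S$, and whose edges record common-neighbor incidences. A direct moment computation shows that, with high probability, $\mathcal G$ is nearly regular with left-degrees $(1+o(1))n/\ell$ and right-degrees $\approx 2(1-\varepsilon)^2 n/\ell$; moreover, subset expansion in $\mathcal G$ is inherited from the pseudorandomness of $H$. I would reserve a small absorber pool of non-branch vertices and run a nibble/greedy matching in $\mathcal G$ (alternatively invoking R\"odl's semi-random method or known near-perfect matching results for pseudo-random bipartite graphs) to saturate all but $o(\binom{m}{2})$ of the uncovered pairs. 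The remaining $o(\binom{m}{2})$ pairs are then absorbed via length-$3$ (or slightly longer) paths routed through the reserve, using the spectral expansion of random lifts (see e.g.\ \cite{spectrum-random-lifts}) to guarantee the required internally disjoint paths exist.

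\emph{Main obstacle.} The hardest step is the matching stage of the lower bound: because the upper bound already matches the lower bound up to a factor $1+o(1)$, the path system must be essentially extremal, meaning the matching in $\mathcal G$ has to saturate all but an $o(1)$ fraction of its left side. Ruling out small bottlenecks (subsets of pairs or of candidate vertices with atypically few incidences) and retaining enough flexibility for the final absorption requires careful control of second-moment fluctuations together with an iterative absorption/nibble scheme tuned to the regime $\ell\ll n$. A secondary issue is the uniform concentration of $D_S$ in the upper bound when $\ell$ is close to $n$, where the $\binom{n\ell}{m}$ union bound becomes marginal; this can be handled by restricting attention to $S$ with spread-out fiber profiles, which are the only ones where $D_S$ can approach its bound.
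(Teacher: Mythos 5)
Your upper-bound argument is essentially the paper's: both start from the observation that the branch set $B$ must span at least $\binom{b}{2}+b-n\ell$ edges (since there are only $n\ell-b$ non-branch vertices to serve as internal vertices), and both then show that with high probability no $b$-set spans that many edges, via a concentration inequality (the paper uses Azuma across the $\binom{n}{2}$ independent matchings; your permutation-Chernoff route is equivalent). The worry you raise about the union bound becoming ``marginal'' when $\ell$ is close to $n$ does not materialize: with $b=\Theta(\sqrt{n\ell})$ the entropy term is $\le e^{3\sqrt{n\ell}\log(n\ell)}$ while Azuma gives a tail of $e^{-\Omega(n\ell)}$, so the union bound is comfortable for the whole range $2\le\ell\le n$ without any restriction to spread-out fiber profiles.

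For the lower bound, your plan shares the right key idea---branch vertices in distinct fibers, matching edges as length-$1$ paths, length-$2$ paths through distinct common neighbors for the bulk, and a short-path absorber for a small leftover---but the execution you propose is genuinely different from the paper's, and the hardest step is left unproved in a way that hides a real difficulty. You recast the length-$2$ stage as a near-perfect matching in an auxiliary pair--vs--vertex bipartite graph $\mathcal{G}$ and hope to finish with a nibble or a known pseudorandom-bipartite matching theorem. But $\mathcal{G}$ is far from codegree-sparse in the regime that matters: two pairs sharing a branch vertex have about $n/\ell^2$ common candidate midpoints, which for $\ell=O(1)$ is a constant fraction of the left-degree $\approx n/\ell$; a vanilla nibble will not give you the $(1-o(1))$-saturation you need when the slack is only a $2\eps$-fraction of the vertex budget. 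Moreover the midpoints consumed by the length-$2$ stage, the reserve for the absorber, and the internal vertices of the absorber paths all compete for the same pool of $n\ell-m$ vertices, and the proposal does not explain how these are kept disjoint. The paper sidesteps both issues structurally: it splits the fibers \emph{a priori} into $F_1$ (for the length-$2$ greedy) and a reserved $\eps n$-fraction $F_2$, proves a union-bounded pseudorandomness claim that lets a simple greedy connect all but $\eps^2\binom{b}{2}$ pairs using only fibers in $F_1$, then prunes the $O(\eps b)$ branch vertices that remain under-connected, attaches stars into $F_2$, and uses the extendability method (Lemma 4.1) inside $G[F_2]$ to route the few remaining connections via paths of length at most $6$. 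If you want to salvage the matching viewpoint, you would at minimum need a Hall-type expansion statement for $\mathcal{G}$ robust to the large codegrees and a clean decoupling of the absorber from the main pool; as written the proposal is not a complete proof of the lower bound.
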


To prove Theorem~\ref{thm:maintheorem-intro} and Theorem~\ref{thm:smalll}, we rely on expansion properties of random lifts of $K_n$ (which are established in Section~\ref{sec:expansion}), and the so-called ``extendability method'' to embed vertex-disjoint paths between branch vertices. We give a more detailed overview in Section~\ref{sec:uppr-bnd}. The lower bound part of \Cref{thm:smalll} is proved in \Cref{sec:smalll}. The upper bound part of \Cref{thm:smalll} and \Cref{thm:lower_bound-intro} is further discussed in Section~\ref{sec:lwr-bnd}.

\textbf{Notation.} 
Given a graph $G$ we denote its vertex set by $V(G)$ and its edge set by $E(G)$. We write $e(G)$ for $|E(G)|$. $\Delta(G)$ stands for the maximum degree of $G$. Given $v \in V(G)$ we denote by $d_G(v)$ the degree of $v$ in $G$. Given $U \subseteq V(G)$ we denote by $N_G(U)$ the neighborhood of $U$, namely the set of vertices in $U$ or with a neighbor in $U$. By $N'_G(U)$, we denote $\cup_{v \in U} N_G(u)$. Given two graphs $G$ and $H$, their union $G \cup H$ is a graph with $V(G \cup H)=V(G) \cup V(H)$ and $E(G \cup H)=E(G) \cup E(H)$. We refer to any path from $u$ to $v$ as a $uv$-path. 
$\Bin(n,p)$ denotes the binomial distribution with parameters $n$ and $p$.

We will denote by $\U^{(\ell)}(G)$ a uniformly at random sampled $\ell$-lift of $G$. Note that this is equivalent to sampling every perfect matching between fibers corresponding to adjacent vertices in $G$ uniformly at random (independently between distinct pairs of fibers).

\section{Preliminaries}\label{sec:preliminaries}
In this section, we provide two simple probabilistic statements.
\begin{lemma}\label{lem: randomness}
    Let $F$ be a bipartite graph with bipartition $A_1 \sqcup A_2,$ where $|A_1|=|A_2|=\ell$. Let $M$ be a uniformly random perfect matching between $A_1$ and $A_2$. Then,
    $$
    \Pr[M \cap E(F) = \emptyset]\leq e^{-\frac{e(F)}{2\ell}}.
    $$
\end{lemma}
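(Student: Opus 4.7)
My plan is to rephrase $\Pr[M \cap E(F) = \emptyset]$ as a normalized permanent and then apply the Bregman--Minc inequality, reducing the problem to a clean inequality on averages of logarithms that I verify via AM--GM.

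Label $A_1 = \{u_1, \ldots, u_\ell\}$ and $A_2 = \{v_1, \ldots, v_\ell\}$. A uniformly random perfect matching $M$ between $A_1$ and $A_2$ is encoded by a uniformly random permutation $\pi$ of $\{1,\ldots,\ell\}$ via $M = \{u_i v_{\pi(i)} : 1 \le i \le \ell\}$. Let $B \in \{0,1\}^{\ell \times \ell}$ be the complement of the bipartite adjacency matrix of $F$, defined by $B_{ij} = 1$ if $u_i v_j \notin E(F)$ and $B_{ij} = 0$ otherwise, and let $r_i = \ell - d_F(u_i)$ be its $i$-th row sum. Then $\Pr[M \cap E(F) = \emptyset] = \operatorname{per}(B)/\ell!$. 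Assuming $r_i \geq 1$ for every $i$ (otherwise the permanent vanishes and the bound is trivial), the Bregman--Minc inequality yields
\[
\operatorname{per}(B) \;\leq\; \prod_{i=1}^{\ell} (r_i!)^{1/r_i}.
\]

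Taking logarithms and recalling that $\sum_i (\ell - r_i) = e(F)$, it suffices to establish the per-row inequality
\[
\frac{\log \ell!}{\ell} \;-\; \frac{\log (\ell - d)!}{\ell - d} \;\geq\; \frac{d}{2\ell} \qquad (\star)
\]
for every integer $0 \leq d \leq \ell - 1$: summing $(\star)$ across all $i$ with $d = d_F(u_i)$ then gives $\log[\operatorname{per}(B)/\ell!] \leq -e(F)/(2\ell)$, which is precisely the claimed bound.

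To prove $(\star)$, a short algebraic manipulation rewrites its LHS as $(d/\ell)(\beta - \alpha)$, where
\[
\beta \;=\; \frac{1}{d}\sum_{k=\ell-d+1}^{\ell} \log k \quad \text{and} \quad \alpha \;=\; \frac{1}{\ell - d}\sum_{k=1}^{\ell-d} \log k
\]
are the averages of $\log k$ over the complementary ranges $\{\ell-d+1,\ldots,\ell\}$ and $\{1,\ldots,\ell-d\}$. It thus suffices to show $\beta - \alpha \geq 1/2$. One immediately has $\beta \geq \log(\ell - d + 1)$ since this is the minimum term. For $\alpha$, applying AM--GM to the integers $1, 2, \ldots, \ell - d$ gives $((\ell - d)!)^{1/(\ell - d)} \leq (\ell - d + 1)/2$, whence $\alpha \leq \log[(\ell - d + 1)/2]$. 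Subtracting yields $\beta - \alpha \geq \log 2 > 1/2$, as required.

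The main anticipated difficulty is the algebraic reduction connecting the Bregman--Minc bound to the exponential form $e^{-e(F)/(2\ell)}$; fortunately, the factor of $2$ in the denominator of the lemma leaves exactly the slack needed for the elementary AM--GM estimate to close the argument (since $\log 2 \approx 0.693 > 1/2$).
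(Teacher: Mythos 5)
Your proof is correct, but it takes a genuinely different route from the paper. The paper gives a short, self-contained induction on $\ell$: pick a vertex $v$ of maximum degree $\Delta(F)$, bound $\Pr[vM(v)\notin E(F)] = 1-\Delta(F)/\ell \le e^{-\Delta(F)/\ell}$, condition on the match of $v$, and apply the inductive hypothesis to $F-\{u,v\}$ (which loses at most $2\Delta(F)$ edges), giving $e^{-\Delta(F)/\ell}\cdot e^{-(e(F)-2\Delta(F))/(2\ell)} = e^{-e(F)/(2\ell)}$. Your approach instead encodes the probability as a normalized permanent of the complement bipartite adjacency matrix and invokes the Bregman--Minc theorem, then closes the gap with a per-row $\log$-convexity estimate via AM--GM. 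Both arguments are sound; the paper's has the advantage of being elementary and self-contained (no permanent machinery), while yours is a clean global calculation once Bregman--Minc is granted, and as a bonus actually yields the slightly stronger bound $2^{-e(F)/\ell}$, since you prove $\beta - \alpha \ge \log 2 > \tfrac12$. One minor point worth making explicit in a write-up: the per-row inequality $(\star)$ holds trivially when $d=0$ (both sides vanish), so the $\beta-\alpha$ analysis only needs to be carried out for $d \ge 1$, where $\beta$ is defined.
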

\begin{proof}
    We prove the statement by induction on $\ell$. For $\ell=1$ the statement clearly holds, so let us assume $\ell \ge 2$. Let $v\in V(F)$ be a vertex with maximum degree in $F$ and suppose $v\in A_i$.
    Let $M(v)\in A_{3-i}$ be the (random) vertex to which $v$ is matched by $M$. Then,
    $$
    \Pr[vM(v)\notin E(F)] = 1-\frac{\Delta(F)}{\ell}\leq e^{-\frac{\Delta(F)}{\ell}}.
    $$
    Given any $u\in A_{3-i}$, if we condition on the outcome $M(v)=u$, we have that $M\setminus uv$ is a uniformly random perfect matching between  $A_i\setminus \{v\}$ and $A_{3-i}\setminus \{u\}$. By induction, we get that
    $$
    \Pr[(M\setminus uv)\cap E(F-\{u,v\})=\emptyset \mid M(v)=u]\le e^{-\frac{e(F-\{u,v\})}{2(\ell-1)}} \leq e^{-\frac{e(F)-2\Delta(F)}{2\ell}},
    $$
    where we used $e(F-\{u,v\})\geq e(F)-2\Delta(F)$.
    Since the bound is independent of $u$, if we put the two bounds together, we get 
    $$
    \Pr[M \cap F =\emptyset]\leq e^{-\frac{e(F)}{2\ell}}.
    $$
\end{proof}

We will also need the following basic anti-concentration lemma.
\begin{lemma}\label{lem: constant prob}
    There exists $\eta>0$ such that the following is true for $n\ge 2$. Let $X=X_1+\ldots+X_{n}$ be a sum of independent Bernoulli random variables such that for every $i$, $\Pr[X_i=1]\geq \frac{1}{96 n}$. Then, 
    $$
        \Pr[X\geq 2]\geq \eta.
    $$
\end{lemma}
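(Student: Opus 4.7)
The plan is to reduce the problem to a one-variable inequality via a monotonicity argument. First, I would observe that $\Pr[X\ge 2]$, viewed as a function of the success probabilities $p_i:=\Pr[X_i=1]$, is monotonically non-decreasing in each coordinate. Indeed, by a standard coupling, if $p_k'\ge p_k$ one can realize $X_k'\ge X_k$ on a joint probability space, and keeping the remaining $X_j$ fixed gives $X'\ge X$, so $\{X\ge 2\}\subseteq\{X'\ge 2\}$.

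By this monotonicity, the infimum of $\Pr[X\ge 2]$ over the feasible region $\{p_i\ge \tfrac{1}{96n}\}$ is attained at the extreme point $p_1=\cdots=p_n=\tfrac{1}{96n}$, so it suffices to show that for $Y_n\sim \Bin(n,\tfrac{1}{96n})$ we have $\Pr[Y_n\ge 2]\ge \eta$ for some $\eta>0$ independent of $n\ge 2$. Writing $p=\tfrac{1}{96n}$, a direct calculation gives
$$\Pr[Y_n\ge 2]=1-(1-p)^{n-1}\bigl(1+(n-1)p\bigr).$$
Using $1-p\le e^{-p}$ and setting $t:=(n-1)p$, which satisfies $t\in[\tfrac{1}{192},\tfrac{1}{96})$ for every $n\ge 2$, the right-hand side is at least $g(t):=1-(1+t)e^{-t}$. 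One computes $g'(t)=te^{-t}>0$ for $t>0$ and $g(0)=0$, so $g$ is strictly positive and increasing on $(0,\infty)$. Taking $\eta:=g(\tfrac{1}{192})>0$ then closes the argument.

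I do not anticipate any serious obstacle here. The only slightly subtle point is to verify the bound uniformly for all $n\ge 2$ rather than just in the limit $n\to\infty$; this is handled by the monotonicity of $g$, which gives the worst case at the smallest admissible value of $t$, namely $t=\tfrac{1}{192}$ corresponding to $n=2$. Note that as a sanity check, the limiting Poisson approximation yields $\Pr[Y_n\ge 2]\to 1-(1+\lambda)e^{-\lambda}$ with $\lambda=\tfrac{1}{96}$, confirming that the obtained $\eta$ is of the correct order.
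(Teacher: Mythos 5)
Your proof is correct and follows essentially the same route as the paper: couple to the i.i.d.\ case $Y_n\sim\Bin(n,\tfrac{1}{96n})$ by monotonicity, compute $\Pr[Y_n\le 1]$ exactly, and control it via $1-p\le e^{-p}$ together with monotonicity in $n$. Your bookkeeping is marginally cleaner, since factoring out $(1-p)^{n-1}$ and analyzing $g(t)=1-(1+t)e^{-t}$ makes the positivity of $\eta$ transparent from $g(0)=0$ and $g'>0$, whereas the paper's choice of $\eta$ happens to be numerically very close to zero and its positivity is only asserted.
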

\begin{proof}
     Consider the random variable $Y=Y_1+\cdots+Y_{n}$ where each $Y_i$ is a Bernoulli random variable independent of the other variables, with $\Pr[Y_i=1]=\frac{1}{96 n}$. Then, we may couple each $Y_i$ with $X_i$ such that $X_i\geq Y_i$ and it follows that 
    $$
        \Pr[X\leq 1]\leq \Pr[Y\leq 1]= \left(1-\frac{1}{96 n}\right)^{n}+n\cdot \frac{1}{96 n}\left(1-\frac{1}{96 n}\right)^{n-1}\leq e^{-1/96}+\frac{e^{-\frac{n-1}{96 n}}}{96}.
    $$
    The final expression is decreasing in $n$ so we can set $\eta:=1-(e^{-1/96}+\frac{e^{-1/192}}{96})>0$.
\end{proof}

\section{Expansion and pseudorandom properties}\label{sec:expansion}
In this section, we collect a number of basic pseudorandomness and expansion properties, which hold with high probability in our random lifts. 

A graph $G$ is said to be \emph{$m$-joined} if, for any two disjoint subsets $A,B\subseteq V(G)$ of size at least $m$ each there exists at least one edge in $G$ between $A$ and $B$. The first simple lemma shows our random lifts are strongly joined.

\begin{lemma}\label{lem: joined}
    Let $G \sim \U(K^\ell_{n})$. Then with high probability as $n \to \infty$, $G$ is $5\ell\log n$-joined.
\end{lemma}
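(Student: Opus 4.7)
Set $m := \lceil 5\ell \log n \rceil$ and let $G \sim \U^{(\ell)}(K_n)$. By monotonicity it suffices to show that, with high probability, every pair of disjoint subsets $A, B \subseteq V(G)$ with $|A| = |B| = m$ admits at least one edge between them in $G$, and the plan is to establish this by a union bound over all such pairs.

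Fix disjoint $A, B \subseteq V(G)$ with $|A| = |B| = m$. For each $v \in V(K_n)$, let $F_v$ denote the fiber of $v$ in $G$ and set $a_v := |A \cap F_v|$, $b_v := |B \cap F_v|$. For each edge $uv \in E(K_n)$, let $H_{uv}$ be the bipartite graph on $F_u \cup F_v$ whose edges are the pairs with one endpoint in $A$ and the other in $B$; thus $e(H_{uv}) = a_u b_v + a_v b_u$. The event that no edge of $G$ connects $A$ to $B$ is exactly the event that, for every $uv \in E(K_n)$, the random matching $M_{uv}$ between $F_u$ and $F_v$ avoids $E(H_{uv})$. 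Since the matchings $(M_{uv})_{uv \in E(K_n)}$ are jointly independent, applying \Cref{lem: randomness} to each pair yields
\[
\Pr\!\left[G \text{ has no edge from } A \text{ to } B\right] \le \exp\!\left(-\frac{1}{2\ell}\sum_{uv \in E(K_n)}(a_u b_v + a_v b_u)\right) = \exp\!\left(-\frac{|A|\,|B| - \sum_v a_v b_v}{2\ell}\right).
\]
Since $a_v \le \ell$ for every $v$, we have $\sum_v a_v b_v \le \ell m$, so the bound simplifies to $\exp(-m(m-\ell)/(2\ell))$, which is at most $\exp(-10\ell \log^2 n)$ once $n$ is large.

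A union bound over the at most $\binom{n\ell}{m}^2 \le (en\ell/m)^{2m}$ ordered pairs of disjoint size-$m$ sets then gives total failure probability at most
\[
\exp\!\left(2m \log \tfrac{en}{5\log n} - 10\ell \log^2 n\right) = \exp\!\bigl(-10\ell \log n \log\log n + O(\ell \log n)\bigr),
\]
which tends to $0$ as $n \to \infty$. The main subtlety of this argument is that it is tight to leading order: the union-bound and single-pair exponents are both equal to $10\ell\log^2 n$ asymptotically, and the argument only closes thanks to the subleading saving $\log(en/(5\log n)) = \log n - \log\log n + O(1)$, which supplies a $\log\log n$-factor gain per chosen pair. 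Any smaller constant than $5$ in front of $\ell\log n$ would require a sharper per-pair estimate than \Cref{lem: randomness} alone provides.
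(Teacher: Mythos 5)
Your proof is correct and follows essentially the same route as the paper: decompose the cut into contributions per pair of fibers, apply \Cref{lem: randomness} to each independent matching, bound $\sum_v a_v b_v \le \ell m$ to control the loss from same-fiber pairs, and close with a union bound over the $\binom{n\ell}{m}^2$ choices. The only cosmetic difference is that the paper keeps the per-pair exponent as $2K\log n$ and writes the count as $o(n^{2K})$, whereas you round the exponent to $10\ell\log^2 n$ and exhibit the $\log\log n$ saving explicitly; both land in the same place.
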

\begin{proof}
    Fix two disjoint arbitrary sets $A,B\subseteq V(G)$ of size $K:=\lceil 5 \ell \log n\rceil$ each. Let $A_1,\ldots,A_n$ be the intersections of $A$ with the fibers of $G$ and similarly, $B_1,\ldots, B_n$ the intersections of $B$ with the fibers of $G$. For each $1\leq i<j\leq n$, we define $F_{i,j} = (A_i\times B_j) \cup (A_j\times B_i)$.
    Since for all $i$ we have $|B_i|\leq \ell$, we obtain
    $$
    \sum_{i<j}|F_{i,j}|=|A| |B|-\sum_{i=1}^n |A_i||B_i|\geq |A||B|-\sum_{i=1}^n |A_i|\ell=|A|(|B|-\ell) \ge K(5\ell\log n-\ell)\geq 4K\ell\log n. 
    $$
    Note that each edge in $F=\bigcup_{i<j}F_{i,j}$ connects $A$ and $B$. As all the matchings between different pairs of fibers are sampled independently, we may apply \Cref{lem: randomness} to each pair of fibers separately. Collecting all the terms, we get that the probability that $G$ does not contain any edge of $F$ is at most 
    $$
    e^{-\frac{|F|}{2\ell}}\leq e^{-2K\log n}=n^{-2K}.
    $$

    The number of choices for $A$ and $B$ is at most
    $$
    \binom{\ell n}{K}^2\leq \left(\frac{en}{5\log n}\right)^{2K}=o(n^{2K}).
    $$
    The result then follows by a union bound over all such choices.
\end{proof}

The second lemma establishes a strong expansion condition into a fixed subset of vertices provided that it contains plenty of vertices from each fiber.
\begin{lemma}\label{lem:expansion2}
    Let $\eps>0$ and let $G\sim \U^{(\ell)}(K_n)$. Let $V\subseteq V(G)$ contain at least 
    $\max\{9\eps \ell,\ell-n\}$ vertices from each fiber of $G$. Then, with high probability as $n \to \infty$, for every $U \subseteq V(G)$ we have $$|N(U)\cap V| \ge \min\left\{{\eps n|U|}, {\eps^{6}
    \ell n}\right\}.$$ 
\end{lemma}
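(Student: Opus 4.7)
My plan is to prove the expansion statement by a case analysis on $|U|$, using joinedness (Lemma~\ref{lem: joined}) in the large regime and the randomness lemma (Lemma~\ref{lem: randomness}) together with a union bound in the small regime. The density hypothesis on $V$ translates to $|V|\ge 9\eps\ell n$ (up to negligible corrections in either case of the $\max$), which I use throughout.

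First, for $|U|\ge 5\ell\log n$ the conclusion is essentially immediate. If $|V\setminus N(U)|\ge 5\ell\log n$, then since $(V\setminus N(U))\cap U=\emptyset$, the $(5\ell\log n)$-joinedness of $G$ forces an edge between $U$ and $V\setminus N(U)$, contradicting the definition of $N(U)$. Hence $|V\setminus N(U)|<5\ell\log n$, which gives $|N(U)\cap V|\ge 9\eps\ell n-5\ell\log n\ge \eps^6\ell n$ for $n$ large. Since $\eps n|U|\ge \eps^6\ell n$ in this range, this settles the bound.

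For $|U|=k<5\ell\log n$ I would argue by contradiction. If $|N(U)\cap V|<\min\{\eps nk,\eps^6\ell n\}$, then $W:=V\setminus N(U)$ is disjoint from $U$, has size at least $8\eps\ell n$, and has no edge to $U$. Applying Lemma~\ref{lem: randomness} to each forbidden bipartite graph $F_{ij}=(U_i\times W_j)\cup(U_j\times W_i)$ and multiplying over fiber pairs (using that the matchings across distinct pairs are independent) gives, via $\sum_{i\ne j}|U_i||W_j|\ge k(|W|-\ell)$,
\[
\Pr\bigl[\text{no edge between }U\text{ and }W\bigr]\le\exp\!\Bigl(-\tfrac{k(|W|-\ell)}{2\ell}\Bigr)\le e^{-4\eps nk}.
\]
I then union-bound over the $\binom{\ell n}{k}$ choices of $U$ and over a suitable witness $W$ to conclude.

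The main difficulty is making the union bound work uniformly in $k$: a naive enumeration of a witness $W$ of size $\sim \eps\ell n$ produces a combinatorial factor that outweighs $e^{-4\eps nk}$ whenever $k$ is small. I plan to circumvent this by taking $W$ to be a subset of $V\setminus N(U)$ of a prescribed size $s(k)$ tuned so that the enumeration $\binom{\ell n}{s(k)}$ is dominated by the probability bound; for $k$ below the threshold where this balancing fails, I would fall back on a direct Chernoff argument. Concretely, for any single vertex $v$, the indicators $\mathbf{1}[M_{ij}(v)\in V_j]$ for $j\ne i$ are independent Bernoullis of mean at least $9\eps$, so a union bound over the $\ell n$ vertices gives $|N(v)\cap V|\ge 8\eps n$ for every $v$ with high probability, which already settles all cases with $k\le 8$. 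Intermediate values of $k$ can be handled by adding vertices of $U$ one at a time and controlling the pairwise common neighborhoods, which have expected size $O(n/\ell)$ and hence do not spoil the linear expansion.
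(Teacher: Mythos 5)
Your high-level plan (split on $|U|$, use Lemma~\ref{lem: randomness} and union bounds) is in the right spirit and your joinedness argument for $|U|\ge 5\ell\log n$ is a clean way to dispatch the large-set case; however, you have two genuine gaps where the heart of the lemma lies.

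First, your ``small $k$'' union bound does not close. You correctly identify that enumerating a witness $W\subseteq V\setminus N(U)$ produces a factor $\binom{\ell n}{|W|}$ which overwhelms the probability bound $e^{-k(|W|-\ell)/(2\ell)}$ unless $k\gtrsim\ell$; but the patch you propose — tuning $|W|=s(k)$ so the two factors balance — cannot be made to work, because the probability gain per witness vertex is roughly $e^{-k/(2\ell)}$ while the entropy cost is roughly $\log(\ell n/s(k))$, and for $k=O(\ell)$ the former never beats the latter regardless of $s(k)$. Your minimum-degree Chernoff fallback only handles $k\le 8$ (more generally $k=O(1)$), and the sketch of ``adding vertices of $U$ one at a time and controlling pairwise common neighborhoods'' gives an expectation statement, not a bound that survives a union over all $\binom{\ell n}{k}$ sets $U$. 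The range $O(1)\le k\le 5\ell\log n$ is exactly where the lemma's content lives, and it remains unproved. The device that makes the union bound work — and which your argument is missing — is to enumerate not a large witness $W$, but instead a small target set $X$ of size $\lfloor \eps n|U|\rfloor$ \emph{together with} an assignment $f$ sending each $u\in U$ to a set of $d=\lceil 4\eps n\rceil$ fibers in which $u$ has a $V$-neighbor; one then bounds the probability that all $d|U|$ of these designated neighbors land inside $X$. This changes the scaling of both the probability and the enumeration in your favor, precisely because the number of fibers per vertex is $\Theta(n)$ rather than the number of potential neighbors being $\Theta(\ell n)$.

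Second, your minimum-degree step silently assumes $\ell$ is not too large. You union-bound Chernoff over all $\ell n$ vertices, which gives $\ell n\cdot e^{-\Theta(\eps n)}$; this tends to $0$ only when $\ell\le e^{c\eps n}$ for some constant $c$. The lemma, however, imposes no upper bound on $\ell$, and the hypothesis that $V$ contains at least $\max\{9\eps\ell,\ \ell-n\}$ vertices per fiber is tailored exactly to cover the superexponential regime: when $\ell$ is huge, one uses that $V^c$ has at most $n$ vertices per fiber and bounds the probability that a given vertex has two neighbors outside $V$ by $\binom{n}{2}n^2/\ell^2\le n^4/\ell^2$, which survives the union bound over $\ell n$ vertices once $\ell\ge e^{\eps n/2}$. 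Your proposal never uses the $\ell-n$ clause, so it does not establish the lemma as stated.
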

\begin{proof}
    We split the proof into two parts. The first part will establish with high probability a minimum degree bound towards $V$ in $G$ (and may be viewed as showing the desired bound holds when $|U|=1$). The basic idea for the second part is to apply a union bound over all (not too large) subsets $U \subseteq V(G)$ and over all subsets $X \subseteq V$, not too large compared to $U$, which should contain $N(U) \cap V$. The issue with this approach alone is that since $V$ might be only a small proportion of $V(G)$ this places only a relatively weak restriction on where $U$ can send its neighbors. To address this we also add to the union bound for each $u \in U$ a set of $\eps n$ fibers in which $u$ should have a neighbor in $V$. Now, for this $u$ and these fibers, the restriction of its neighborhood within $V$ to belong to $X$ becomes substantial and gives us the result.

    For the first part let $\mathcal{E}$ denote the event that every vertex $v\in V(G)$ has more than $4\eps n$ neighbors in $V$. For a fixed vertex $v\in V(G)$ the probability that it has a neighbor in $V$ belonging to any given fiber is at least $9\eps$. Moreover, these events are independent as we vary across the fibers. This implies that the random variable $N$ counting the number of neighbors of $v$ in $V$ dominates $\Bin(n-1,9\eps)$.  Then, $\mathbb{E}[N]\geq 9\eps (n-1)\ge 8\eps n$. By Chernoff's inequality, we get that $N\leq 4\eps n$ with probability at most $e^{-\eps n}.$ Therefore, by a union bound over all vertices, if $\ell\leq e^{\eps n/2}$, $\mathcal{E}$ occurs with high probability. Suppose $\ell\geq e^{\frac{\eps n}{2}}$. Then, the probability that $v$ has $2$ neighbors in $V(G)\backslash V$ in any two given fibers is at most $\frac{n^2}{\ell^2}$, since $V(G)\backslash V$ contains at most $n$ vertices per fiber. Now by a union bound over all pairs of distinct fibers the probability that $v$ has at least $2$ neighbors in $V(G)\backslash V$ is at most $\binom{n}{2}\frac{n^2}{\ell^2}\leq\frac{n^4}{\ell^2}$. Since $\ell\geq e^{\frac{\eps n}{2}}$, we again get by a union bound over all $\ell n$ vertices in $V(G)$ that $\mathcal{E}$ happens with high probability.

    Turning to the second part, given a subset $U \subseteq V(G)$, such that $2\le |U|\le \ceil{\eps^5\ell}=:m$, let $U_1,\ldots, U_n$ be its intersections with the fibers. Let $f:U \to \binom{[n]}{d}$ be an assignment of sets of fibers of size $d:=\ceil{4\eps n}$ to vertices in $U$.
    Let $X\subseteq V$ be a set of size $\floor{\eps n|U|}$, and let $X_1,\ldots,X_n$ be its intersections with the fibers. Then, we denote by $\mathcal{A}(U,f,X)$ the event that each vertex $u\in U$ has an edge to $X_j$ for each $j\in f(u)$.  

    \begin{claim*}
        For every choice of $U,f,X$ as above, $\Pr[\mathcal{A}(U,f,X)]\leq \left(\frac{|U|}{4\ell}\right)^{d|U|/2}.$
    \end{claim*}
    \begin{proof}
        For each $i,j\in [n]$, let us denote by $U_{i,j}$ the set of vertices $u\in U_i$ for which $j\in f(u)$, and set $u_{i,j}=|U_{i,j}|$.
        Let $x_i$ be the number of vertices in $X_i$ and without loss of generality, assume that $x_1\geq \ldots\geq x_n$. Define
        $$
        x:=\frac{x_1+\ldots+x_{d}}{d},
        $$
        and note that $x\leq |X|/d$.
        Let $G_{i,j}$ be the event that all neighbors of $U_{j,i}$ in fiber $i$ belong to $X_i$. Then, $\Pr(G_{i,j}) \le \left( \frac{x_i}{\ell}\right)^{u_{j,i}}$. Furthermore, since the event only depends on the matching chosen between fibers $i$ and $j$ we have that $G_{i,j}$ is independent of all other $G_{i',j'}$ except $G_{j,i}$. This means that the probability that all these events happen simultaneously is at most (using the AM-GM inequality):
        $$
        \prod_{i \neq j} \left( \frac{x_i}{\ell}\right)^{u_{j,i}/2}=\prod_{j=1}^n \prod_{u\in U_j}\prod_{i\in f(u)}\left(\frac{x_i}{\ell}\right)^{1/2}\leq \prod_{j=1}^n \prod_{u\in U_j}\prod_{i=1}^{d}\left(\frac{x_i}{\ell}\right)^{1/2}\leq\left(\frac{x}{\ell}\right)^{d|U|/2}.
        $$       
        The desired statement now follows since $x\leq |X|/d\le |U|/4$.
    \end{proof}
    Let $\mathcal{A}$ be the event that no $\mathcal{A}(U,f,X)$ happens. The number of choices for $U,f,X$ where $|U|=u\le m$ and $|X|=\floor{\eps nu}$ is at most
    $$
    \binom{n}{d}^{u}  \binom{\ell n}{|X|} \binom{\ell n}{u}\le
    \left( \frac{e}{4\eps}\right)^{ud}\cdot \left( \frac{e\ell}{\eps u}\right)^{\eps nu}\cdot \left( \frac{e\ell n}{u}\right)^{u}\le 
    \left( \frac{\ell }{\eps^5 u}\right)^{ud/4} \cdot \left( \frac{e\ell n}{u}\right)^{u}.
    $$ 
    By a union bound, using the above claim we now get
    $$\Pr\left[\mathcal{A}^c\right]\leq \sum_{u=1}^{m}
    \left(\frac{u}{4\ell }\right)^{ud/2}\cdot \left( \frac{\ell }{\eps^5 u}\right)^{ud/4} \cdot \left( \frac{e\ell n}{u}\right)^{u} 
    = \sum_{u=1}^{m} \left( \frac{u }{16\eps^5 \ell}\right)^{ud/4}\left( \frac{e\ell n}{u}\right)^{u}=\sum_{u=1}^{m}{\left( \frac{u }{16\eps^5 \ell}\right)^{u(d-4)/4} \left(\frac{en}{16\varepsilon^5}\right)^u}.$$
    Recalling that $2\le u \le  m=\ceil{\varepsilon^5 \ell}$ and $d\ge 4\eps n$, we can bound the first term of the above sum by $(1/8)^{u(d-4)/4}$ and the second term by $(1+o(1))^{u(d-4)/4}$. Altogether, we obtain $\mathbb{P}\left[\mathcal{A}^c\right]\le \sum_{u=1}^\infty{(1/8+o(1))}^{u(d-4)/4}\rightarrow 0$ for $n \rightarrow \infty$. This shows that $\mathcal{A}$ occurs with high probability, as desired.

    By a final union bound, we get that with high probability both $\mathcal{A}$ and $\mathcal{E}$ occur. Assuming this is the case, suppose $|N(U) \cap V|\le \floor{{\eps n|U|}}$, for some set $U\subseteq V(G)$ with $|U|\le m$. $\mathcal{E}$ implies $|U| \ge 2$ and that for every $u \in U$ we can fix a set $f(u)$ consisting of $d$ fibers containing a neighbor of $u$ belonging to $V$. Let $X$ be $N(U) \cap V$ padded with additional vertices if needed to ensure $|X|=\floor{\eps n|U|}$. Now for each $u \in U$ and $j \in f(u)$, we know that the neighbor of $u$ in fiber $j$ belongs to $V$, so belongs to $N(U) \cap V \subseteq X$. This implies $\mathcal{A}(U,f,X)$ happens, contradicting $\mathcal{A}$. Hence, $|N(U) \cap V|> {\eps n|U|}$ for every $U\subseteq V(G)$ with $|U|\le m$. For any larger set $U$, we can apply this inequality to any subset consisting of $m$ vertices to conclude $|N(U) \cap V|\ge {\eps nm}\ge {\eps^{6}n\ell}$.
\end{proof}

The final auxiliary lemma shows that given a suitable collection of disjoint subsets of vertices of our lift, we can with high probability find a matching containing an edge between almost all pairs of our subsets. For us, the subsets in the collection we work with will all be transversals. We say that a subset of vertices of an $\ell$-lift of a graph is a \emph{transversal} if it contains exactly one vertex per fiber. We say it is a \emph{partial transversal} if it contains at most one vertex per fiber.
\begin{lemma}\label{lem: contains matching}
    Let $\gamma>0$ and $G\sim\U^{(\ell)}(K_{n-1})$ with $n\leq \ell \le \frac{\gamma^3}{48}\cdot n^2$.  
    Let $V_1,\ldots,V_n$ be a collection of disjoint transversals of $G$. Then with high probability, $G$ contains a matching $M$ such that for all but $\gamma n^2$ pairs $1\leq i<j\leq n$, $M$ contains an edge of the form $uv$ with $u\in V_i$ and $v\in V_j$.
\end{lemma}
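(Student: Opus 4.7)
My plan is to combine a tail bound on $|E_{ij}|$ with a randomized greedy matching construction.

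First, I would observe that $|E_{ij}|$ is a sum of independent contributions across the $\binom{n-1}{2}$ unordered pairs of fibers $\{f,f'\}$: each such pair has two ``potential crossings'' between $V_i$- and $V_j$-vertices in the two fibers, each present with probability $1/\ell$, and the contributions from distinct pairs of fibers are independent because the matchings between them are sampled independently. This gives $\mu := \mathbb{E}[|E_{ij}|] = (n-1)(n-2)/\ell \geq 48/\gamma^3$ together with $\mathrm{Var}(|E_{ij}|) = O(\mu)$, so by Chebyshev's inequality $\Pr[|E_{ij}| \leq \mu/2] = O(\gamma^3)$. A Markov bound over the $\binom{n}{2}$ pairs then shows that whp at most $\gamma n^2/4$ pairs are \emph{bad} (with $|E_{ij}| \leq c_\gamma := \mu/2 \geq 24/\gamma^3$); I call all other pairs \emph{good}.

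Second, I would process the good pairs in a uniformly random order, and for each $(i,j)$ try to add an edge in $E_{ij}$ whose endpoints are both still unmatched (skipping the pair otherwise). Let $X_i^{(t)} \subseteq V_i$ denote the vertices of $V_i$ matched after $t$ steps. A martingale concentration argument (e.g., McDiarmid's inequality for random permutations) yields that $|X_i^{(t)}|$ stays within $o(n)$ of its expectation throughout the process whp. Using the independence across pairs of fibers, the number of available edges in $E_{ij}$ at time $t$ concentrates (via Chernoff) around $|E_{ij}| \cdot (1 - |X_i^{(t)}|/(n-1))(1 - |X_j^{(t)}|/(n-1))$. In the ``bulk'' where $t \leq \binom{n}{2} - \gamma n^2/2$, the concentration of $|X_i^{(t)}|$ yields $|X_i^{(t)}|,|X_j^{(t)}| \leq (1-\gamma/2)(n-1)$, so this expected number of available edges is at least $c_\gamma(\gamma/2)^2 = \Omega(1/\gamma) \gg 1$, and hence an available edge exists with probability $1 - e^{-\Omega(1/\gamma)}$. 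A union bound shows that the number of good pairs skipped in the bulk is $o(\gamma n^2)$ whp. Even if all $\gamma n^2/2$ pairs processed in the ``tail'' fail, combined with the $\gamma n^2/4$ bad pairs the total uncovered count is at most $\gamma n^2$ whp.

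The hard part will be carefully combining these concentration claims simultaneously for all transversals $V_i$, good pairs $(i,j)$, and time steps $t$, given the nontrivial interplay between the lift's randomness (which determines $E_{ij}$) and the greedy's random ordering (which determines $X_i^{(t)}$). In particular, one must ensure that the ``uniform distribution'' heuristic for the edges of $E_{ij}$ remains valid even after conditioning on the greedy's history up to step $t$; this should follow from a careful exposure argument that defers the randomness of each fiber-pair matching until one of its edges is actually tested, combined with the bound $d(v,V_j) \sim \mathrm{Bin}(n-2,1/\ell)$ ensuring each vertex occupies at most $O(\log n)$ edges of $E_{ij}$ whp.
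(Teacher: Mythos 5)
Your proposal takes a genuinely different, ``constructive'' route: lower-bound $|E_{ij}|$ by concentration, then run a randomized greedy matching and track the uncovered fractions $|X_i^{(t)}|$ via martingale/Chernoff estimates. The paper instead argues non-constructively: take \emph{any} maximal matching $M$ (at most one edge per pair $(V_i,V_j)$), form the auxiliary graph $H$ on $[n]$ of unconnected pairs, observe that the uncovered sets $U_i\subseteq V_i$ then satisfy $|U_i|=d_H(i)$ and, by maximality, carry \emph{no} $G$-edge between $U_i$ and $U_j$ for any $ij\in H$. A single union bound over all $2^{2n^2}$ choices of $(H,U_1,\dots,U_n)$ with $e(H)\ge\gamma n^2$, combined with the matching-avoidance estimate from Lemma~\ref{lem: randomness}, rules this out whp. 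This sidesteps every difficulty your greedy process creates: there is no process to track, no adaptive conditioning, and no need to disentangle the shared randomness between $E_{ij}$ and $E_{i'j'}$.

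There are two concrete gaps in your sketch. First, both places where you invoke Markov do not yield ``with high probability'' when $\gamma$ is a fixed constant. You get $\Pr[|E_{ij}|\le\mu/2]=O(\gamma^3)$, so $\mathbb{E}[\#\text{bad}]=O(\gamma^3 n^2)$, and Markov only gives $\Pr[\#\text{bad}\ge\gamma n^2/4]=O(\gamma^2)$, a \emph{constant}, not $o(1)$. The same issue recurs for the number of skipped good pairs in the bulk: $\binom n2 e^{-\Omega(1/\gamma)}$ divided by $\gamma n^2/4$ is a constant. To get $o(1)$ failure probability you would need a second-moment (or stronger) concentration argument for $\#\text{bad}$, which is delicate because the variables $|E_{ij}|$ are mutually correlated through the shared fiber-pair matchings; the same matching between fibers $f,f'$ contributes to \emph{every} $E_{ij}$. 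Second, the ``careful exposure argument'' you defer is precisely where the approach becomes hard: revealing matchings as the greedy touches them exposes information simultaneously about all pairs $(i',j')$, so the conditional distribution of $E_{ij}$ after conditioning on the greedy history is not the ``fresh'' distribution you use in the bulk estimate. You correctly identify this interplay as the hard part, but the proposal does not resolve it, and without it the Chernoff bound on available edges at time $t$ is not justified. The paper's maximal-matching reformulation turns the entire process argument into one static union bound, which is why it avoids all of this.
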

\begin{proof}
Let $M$ be a maximal matching in $G[V_1\cup\ldots\cup V_n]$ such that for each $1\leq i<j\leq n$ there is at most one edge $uv\in M$ with $u\in V_i$ and $v\in V_j$. Let $H$ be an auxiliary graph with $V(H)=[n]$, where $ij\in H$ if and only if $M$ does not contain an edge $uv$ with $u\in V_i$ and $v\in V_j$. For each $i\in [n]$, let $U_i\subseteq V_i$ be the set of vertices not covered by $M$. Note that $|U_i|=d_H(i)$.
The following claim shows that we can ensure with high probability that $H$ contains less than $\gamma n^2$ edges.

    \begin{claim*}
        The following holds with high probability. For all graphs $H$ with $V(H)=[n]$ and $e(H)\geq \gamma n^2$ and for every collection of sets $U_1,\dots, U_n$ such that for every $1\leq i\leq n$, $U_i\subseteq V_i$ is of size $d_H(i)$, there exists some $ij\in H$ for which there is an edge between $U_i$ and $U_j$ in $G$.
    \end{claim*}
   
    \begin{proof}
        To prove the claim, we prove that the statement holds for an arbitrary choice of $H$ and $U_1,\dots,U_n$ with high enough probability so that we can finish the proof with a union bound over all these choices.
    For now, let us fix some graph $H$ and a collection of sets $U_1,\dots,U_n$ as in the statement and we say that $(H,U_1,\dots,U_n)$ is \emph{bad} if it does not satisfy the required property. 
    Let $X\subseteq [n]$ be the set of vertices of $H$ with degree at least $\gamma n/2$. Then, $H[X]$ contains at least $\gamma n^2/2$ edges, since $\sum_{i\in [n]\setminus X}d_H(i)\leq \gamma n^2/2$. 
    Let $E$ denote the set of pairs $uv$ with $u\in U_i$ and $v\in U_j$ such that $ij\in H[X]$ and $u$ and $v$ belong to different fibers of $G$. On a high level, the set $E$ is the set of potential edges that we will consider. Indeed, $(H,U_1,\dots,U_n)$ is not bad when $G$ contains an edge of $E$. 
    Note that the number of pairs in $E$ between $U_i$ and $U_j$ with $ij \in H[X]$ is at least $|U_i||U_j|-|U_i|$ since both $U_i$ and $U_j$ are subsets of transversals so they intersect any given fiber in at most one vertex. So,
    \begin{align*}
        |E|       
        &\geq \sum_{ij\in H[X]} |U_i||U_j|-|U_i|
        =\sum_{ij\in H[X]} d_H(i)(d_H(j)-1) \ge \frac{\gamma n^2}2\cdot \frac{\gamma n}{2}  \left(\frac{\gamma n}{2}-1\right)\ge \frac{\gamma^3n^4}{12},
    \end{align*}
    where in the penultimate inequality we used the fact that $H[X]$ has at least $\gamma n^2 /2$ edges and that for each $i\in X$ by definition of $X$ we have $d_H(i)\geq \gamma n/2$.
    
    For each $uv\in K_{n-1}$, denote by $E_{uv}$ the set of edges in $E$ between the fibers corresponding to $u$ and $v$. We say that $E_{uv}$ is \textit{bad} if $G$ does not contain an edge of $E_{uv}$ and $E$ is bad if and only if all of the $E_{uv}$ are bad. By Lemma~\ref{lem: randomness}, we have that
    $$
    \Pr[E_{uv}\text{ bad}]\leq e^{-\frac{|E_{uv}|}{2\ell}}.
    $$
    Since all the matchings are independent, we get that 
    $$
    \Pr[E\text{ bad}] = \prod_{uv\in K_{n-1}}\Pr[E_{uv}\text{ bad}]\leq e^{-\frac{|E|}{2\ell}}\leq e^{-\frac{\gamma^3n^4}{24\ell}}\leq e^{-2n^2},
    $$
    where we use the assumed upper bound on $\ell.$
    Finally, we union bound over all choices of $H$ and $(U_1,\dots,U_n)$. Since $H$ is a graph on $[n]$, there are at most $2^{n^2}$ choices for $H$. For each $i\in [n]$, there are $2^n$ choices for $U_i$. So, we get that in total there are at most $2^{2n^2}$ choices for $(H,U_1,\dots,U_n)$, so by a union bound the desired statement fails with probability at most $(2/e)^{2n^2}=o(1)$, completing the proof of the claim.
    \end{proof}
    By maximality of $M$ the conclusion of the claim can not hold for our auxiliary graph $H$, so it must have less than $\gamma n^2$ edges, as desired.
\end{proof}

To conclude the section we gather the conclusions of the above lemmas, slightly adapted for how we will use them in the next section. To this end, it will be convenient to have the following definition, which we borrow from \cite{montgomery2019spanning}. Here, to follow \cite{montgomery2019spanning}, we use $N'_G(U):=\cup_{v \in U} N_G(u)$.

\begin{definition*}
Let $D,m\in\mathbb N$ with $D\ge 3$.
Let $G$ be a graph and let $S\subset G$ be a subgraph with $\Delta(S) \leq D$.
We say that $S$ is \emph{$(D,m)$-extendable}
if for all $U\subset V(G)$ with $1\le |U|\le 2m$ we have
    \begin{equation}\label{def:extendability}
        |N'_G(U)\setminus V(S)|\ge (D-1)|U|-\sum_{u\in U\cap V(S)}(d_S(u)-1).
    \end{equation}
\end{definition*}

\begin{lemma}\label{lem:winningconditions}
Let $\varepsilon,\gamma>0$ and let $\ell\geq (1+\varepsilon)n$. Set $(D,m):=(n^{0.99}, 5\ell\log n)$. Let $G\sim\U(K^\ell_{n-1})$ and let $V_1,\ldots,V_n$ be a collection of disjoint transversals of $G$. Then, the following hold with high probability as $n \to \infty$.
\begin{enumerate}[label = {{\emph{\textbf{A\arabic{enumi}}}}}]
    \item\label{joined} $G$ is $m$-joined. 
    \item\label{extendable} The empty graph on vertex-set $V_1\cup\ldots \cup V_n$ is $(D,m)$-extendable in $G$.
    \item\label{pseudorandomstars} If $\ell \le \frac{\gamma^3}{48}\cdot n^2$, then $G$ contains a matching $M$ such that for all but $\gamma n^2$ pairs $1\leq i<j\leq n$, $M$ contains exactly one edge of the form $uv$ with $u\in V_i$ and $v\in V_j$.
\end{enumerate}
\end{lemma}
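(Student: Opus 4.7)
The plan is to establish each of the three conclusions separately by invoking the corresponding preparatory lemma of this section, and then take a single union bound to conclude that all three hold simultaneously with high probability. Property \ref{joined} is exactly the conclusion of \Cref{lem: joined} (whose argument is insensitive to whether the base graph is $K_n$ or $K_{n-1}$). Property \ref{pseudorandomstars} follows from \Cref{lem: contains matching}, since under the present hypotheses $(1+\varepsilon)n \le \ell \le \frac{\gamma^3}{48} n^2$ we are in its parameter regime for all $n$ sufficiently large; moreover, the matching $M$ produced in the proof of that lemma is by construction a matching containing at most one edge between each pair $V_i,V_j$, so the ``at least one edge'' conclusion automatically upgrades to the ``exactly one'' statement required here.

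The substantial part is property \ref{extendable}, and the strategy is to translate it into a neighborhood expansion statement and appeal to \Cref{lem:expansion2}. Since $S$ is the empty graph on $V(S) := V_1 \cup \ldots \cup V_n$, we have $d_S(u) = 0$ for every $u \in V(S)$, so the right-hand side of \eqref{def:extendability} simplifies to $(D-1)|U| + |U \cap V(S)| \le D|U|$, and it suffices to exhibit $D|U|$ neighbors of $U$ outside $V(S)$ for every $U$ with $1 \le |U| \le 2m$. Set $V := V(G) \setminus V(S)$; since the $n$ pairwise disjoint transversals each meet every fiber in exactly one vertex, $V$ contains exactly $\ell - n \ge \varepsilon n$ vertices per fiber. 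Pick a constant $\varepsilon' = \varepsilon'(\varepsilon) > 0$ small enough that $9\varepsilon'\ell \le \ell - n$ (for instance $\varepsilon' := \frac{\varepsilon}{10(1+\varepsilon)}$); this verifies the hypothesis of \Cref{lem:expansion2} for the set $V$ and parameter $\varepsilon'$.

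\Cref{lem:expansion2} then yields that with high probability $|N_G(U) \cap V| \ge \min\{\varepsilon' n |U|,\, (\varepsilon')^6 \ell n\}$ for every $U \subseteq V(G)$. For $1 \le |U| \le 2m = 10\ell\log n$, both terms of this minimum comfortably exceed $D|U| = n^{0.99}|U|$ for large $n$: the first because $\varepsilon' n \ge n^{0.99}$ (as $\varepsilon'$ is a positive constant), and the second because the inequality $(\varepsilon')^6 \ell n \ge D \cdot 2m$ reduces to $(\varepsilon')^6 n^{0.01} \ge 10 \log n$. This proves \ref{extendable}, and a single union bound over the three established events then finishes the proof. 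The only delicate point in the plan is the parameter bookkeeping -- specifically, choosing $\varepsilon'$ small enough that \Cref{lem:expansion2} applies while ensuring its expansion factor still beats $D = n^{0.99}$ -- but this is a routine check and not a genuine obstacle.
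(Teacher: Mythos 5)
Your proof is correct and follows essentially the same route as the paper's: invoke \Cref{lem: joined} for \ref{joined}, \Cref{lem: contains matching} for \ref{pseudorandomstars}, and apply \Cref{lem:expansion2} with $V = V(G)\setminus(V_1\cup\ldots\cup V_n)$ and a suitably scaled-down parameter to verify \ref{extendable}, finishing with a union bound. The only cosmetic difference is your particular choice of $\varepsilon'$ (the paper simply uses $\varepsilon/10$) and that the paper conservatively subtracts an extra $|U|$ from the neighborhood bound, which, as your calculation shows, is not actually needed to clear $n^{0.99}|U|$.
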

\begin{proof}[Proof of \Cref{lem:winningconditions}]
    With high probability, $G$ simultaneously satisfies the conclusions of \Cref{lem: joined,lem: contains matching} as well as the conclusion of \Cref{lem:expansion2} applied with $V=V(G)\setminus\left(V_1\cup\ldots\cup V_n\right)$ and $\eps/10$. Suppose $G$ does indeed satisfy these conclusions. \ref{joined} follows directly from \Cref{lem: joined} and \ref{pseudorandomstars} follows directly from \Cref{lem: contains matching}, where we remove unnecessary edges. Let us prove that \ref{extendable} follows from \Cref{lem:expansion2}. Let $U\subseteq V(G)$ be an arbitrary subset with $|U|\leq 10\ell\log n$. We get that 
    $$|N'_G(U)\setminus V|\geq \min\left\{\frac{\eps n|U|}{10}, \frac{\eps^{6}
    \ell n}{10^6}\right\}-|U|\geq n^{0.99}|U|,$$
    for $n$ large enough, which implies the desired extendability property as in (\ref{def:extendability}).
\end{proof}

\section{Finding a subdivision in a lift}\label{sec:uppr-bnd}
In this section, we provide a proof of \Cref{thm:maintheorem-intro}. In addition to the expansion properties we established in the previous section, collected in \Cref{lem:winningconditions}, one final ingredient is the following simple but convenient ``well-linkedness'' lemma of Montgomery which builds on work of Glebov, Johanssen, and Krivelevich (see \cite{montgomery2019spanning} for a more extensive survey, and see \cite{draganic2022rolling, hyde2023spanning,draganic2024hamiltonicity} for other recent applications of this method). It is well-known (and easy to show) that expander graphs have at most logarithmic diameter, meaning that between any pair of vertices, there exists a short connecting path. However, in order to find our desired clique subdivisions, we will need to embed several short connecting paths between various branch vertices in a vertex-disjoint manner. The following lemma asserts the existence of short connecting paths whose removal does not damage the expansion conditions of the underlying graph, making the lemma suitable to be invoked repeatedly to embed several vertex-disjoint paths at once. 
\begin{lemma}[{\cite[Corollary 3.12]{montgomery2019spanning}}]\label{lemma:connecting}
Let $D,m\in\mathbb N$ with $D\ge 3$.
Let $G$ be an $m$-joined graph which contains a $(D,m)$-extendable subgraph $S$ with at most $|V(G)|-10Dm$ vertices. Suppose that $u$ and $v$ are two distinct vertices in $S$ with $d_S(u),d_S(v)\le D/2$. 
Then, there exists a 
$uv$-path $P$ in $G$ of length at most $3\ceil {\log(2m)/\log(D-1)}$ such that 
all internal vertices of $P$ lie outside of $S$ and 
$S\cup P$ is $(D,m)$-extendable.
\end{lemma}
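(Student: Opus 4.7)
The plan is to grow two BFS-like trees $T_u$ and $T_v$ rooted at $u$ and $v$, each expanding by a branching factor of essentially $D-1$ per level, while maintaining the $(D,m)$-extendability of the accumulated subgraph $S\cup T_u\cup T_v$. After $k:=\lceil \log(2m)/\log(D-1)\rceil$ levels, each tree will contain at least $(D-1)^k\ge 2m$ depth-$k$ leaves outside $V(S)$; since $G$ is $m$-joined, there must be an edge $xy$ with $x$ a leaf of $T_u$ and $y$ a leaf of $T_v$. The desired $uv$-path $P$ is then the $u$--$x$ branch of $T_u$, followed by $xy$, followed by the $y$--$v$ branch of $T_v$, of length at most $2k+1\le 3k$ (using $k\ge 1$, which is guaranteed by $D\ge 3$ in the non-trivial regime).

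The technical heart is an \emph{extension lemma}: if $S^*$ is $(D,m)$-extendable with $|V(S^*)|\le|V(G)|-10Dm$ and $L\subseteq V(S^*)$ consists of vertices of current $S^*$-degree at most $D-1$, then one can attach to each $\ell\in L$ a distinct new pendant $\ell'\in V(G)\setminus V(S^*)$ so that the resulting subgraph is still $(D,m)$-extendable. The existence of the pendants follows from applying \eqref{def:extendability} with $U=L$, which yields at least $(D-1)|L|-\sum_{\ell\in L}(d_{S^*}(\ell)-1)\ge |L|$ outside neighbors of $L$, with a Hall-type argument on the bipartite candidate graph producing a system of distinct representatives. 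Preservation of \eqref{def:extendability} reduces to a case analysis over an arbitrary test set $U$ of size at most $2m$: the left-hand side $|N'_G(U)\setminus V|$ drops by at most the number of newly added pendants that lie in $N'_G(U)$, while the right-hand side deficit weakens by one per element of $U\cap L$ (thanks to the increased degree of each $\ell \in L$) and does not change per element of $U\cap L'$ (since each pendant has degree exactly $1$ in the enlarged subgraph). A careful accounting shows that these two changes match up and the inequality is preserved.

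Iterating the extension lemma, I would grow $T_u$ and $T_v$ in parallel inside the shared extendable subgraph, appending $D-1$ new children to every current leaf at each round. The hypothesis $d_S(u),d_S(v)\le D/2<D-1$ allows $u$ and $v$ themselves to serve as the initial leaves, and since all fresh pendants are drawn from outside the accumulated subgraph, the trees $T_u$ and $T_v$ remain vertex-disjoint automatically. The total number of vertices added across all $k$ rounds of both trees is at most $2\sum_{i=0}^{k}(D-1)^i\le 8Dm$, safely within the slack $|V(G)|-|V(S)|\ge 10Dm$ throughout the procedure. After the last round, the depth-$k$ leaves of $T_u$ and of $T_v$ provide the two leaf sets of size at least $m$ needed to invoke $m$-joinedness.

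The main obstacle is to verify that $S\cup P$ itself is $(D,m)$-extendable, not merely the larger $S\cup T_u\cup T_v$. The cleanest route is to observe that removing a degree-$1$ vertex $\ell'$ (together with its incident edge) from an extendable subgraph preserves $(D,m)$-extendability: the left-hand side of \eqref{def:extendability} grows by $[\ell'\in N'_G(U)]$ and the right-hand side simultaneously grows by $[\ell\in U]$ (and these two indicators coincide exactly when $\ell\in U$, since then $\ell'\in N_G(\ell)\subseteq N'_G(U)$), with the remaining cases handled by a short verification. Iteratively pruning, starting from the outermost leaves, all tree vertices not on the chosen $uv$-path reduces $S\cup T_u\cup T_v$ down to $S\cup P$ while preserving $(D,m)$-extendability at every step. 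Apart from this careful bookkeeping in the extension and pruning steps, the rest of the argument is a standard expand-and-connect strategy.
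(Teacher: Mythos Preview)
The paper does not prove this lemma at all: it is quoted verbatim as Corollary~3.12 from Montgomery~\cite{montgomery2019spanning} and used as a black box, so there is no ``paper's own proof'' to compare against. Your sketch is precisely the strategy Montgomery uses to establish the result---grow two $(D-1)$-ary trees from $u$ and $v$ via a leaf-extension lemma that preserves $(D,m)$-extendability, invoke $m$-joinedness once the leaf sets reach size $m$, and then prune the unused branches via the observation that deleting a degree-$1$ vertex preserves extendability---so your approach is not merely correct but is essentially a faithful reconstruction of the cited proof.

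One small technical remark: when you grow the two trees ``in parallel'', the combined leaf set $L$ at depth $k-1$ has size $2(D-1)^{k-1}$, which can exceed $2m$ and thus fall outside the range where \eqref{def:extendability} applies directly to all of $L$ in your Hall argument. This is easily fixed by growing $T_u$ and $T_v$ sequentially rather than simultaneously (each tree alone has at most $(D-1)^{k-1}<2m$ leaves at every intermediate depth), which is how Montgomery phrases it. Similarly, the vertex-budget bookkeeping is slightly tighter than your $8Dm\le 10Dm$ suggests, since the extension step should remain valid throughout; in Montgomery's write-up the slack is arranged so that only a weaker size hypothesis is needed at each individual extension. These are cosmetic adjustments; the substance of your argument is correct.
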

Our process for finding a $K_n$-subdivision (for Theorem~\ref{thm:maintheorem-intro}) begins by embedding $n$ branch vertices with disjoint neighborhoods (of size $n-1$). The remaining task is to find appropriate connecting paths between the neighborhoods of the branch vertices. Given \ref{joined} and \ref{extendable} of \Cref{lem:winningconditions} (applied with $V_i$ being the neighborhood of the $i$th branch vertex), \Cref{lemma:connecting} can be used repeatedly to find a significant portion of these connecting paths. However, the shortest connecting path guaranteed by \Cref{lemma:connecting} has length at least $3$, and we are required to make at least about $n^2$ connections between neighbors of branch vertices. Say when $\ell\leq 2n$, there simply aren't enough vertices in an $\ell$-lift of $K_n$ to accommodate $n^2$ many vertex-disjoint paths (disjoint with the initial branch vertices and internally disjoint with their neighborhoods) of length at least $3$. This is where \ref{pseudorandomstars} comes into play. Our algorithm uses \ref{pseudorandomstars} in order to convert the initially embedded branch vertices and their disjoint neighborhoods into a subdivision of a nearly complete graph on $n$ vertices whilst using only edges (or connecting paths of length $1$) in order to join up the neighbors of the branch vertices. This initial step costs us no additional vertices and is crucial to lowering the number of connections that need to be made afterwards, making it feasible to finish the job by repeated applications of \Cref{lemma:connecting}.

We are now ready to prove the upper bound part of our main theorem (Theorem~\ref{thm:maintheorem-intro}). We first state a more precise version.
\begin{theorem}\label{thm:maintheorem}
Let $\varepsilon>0$ and $\ell\geq (1+\varepsilon)n$. Then, $G\sim \U^\ell(K_n)$ contains a subdivision of $K_n$ as a subgraph with high probability as $n \to \infty$.
\end{theorem}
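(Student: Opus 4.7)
The plan is to reduce Theorem~\ref{thm:maintheorem} to \Cref{lem:winningconditions} and \Cref{lemma:connecting}. Fix any fiber $F_0$ of $G\sim\U^\ell(K_n)$ and pick any $n$ vertices $b_1,\dots,b_n\in F_0$ to serve as branch vertices. Setting $G':=G-F_0$ (distributed as $\U^\ell(K_{n-1})$) and $V_i:=N_G(b_i)\cap V(G')$, the $V_i$ automatically form $n$ pairwise disjoint transversals of $G'$: each matching between $F_0$ and another fiber is perfect, so distinct $b_i,b_j$ have distinct neighbors in every other fiber. Conditioning on $V_1,\dots,V_n$, $G'$ is still a uniform $\ell$-lift of $K_{n-1}$, so \Cref{lem:winningconditions} applied with a small constant $\gamma=\gamma(\varepsilon)$ yields \ref{joined}, \ref{extendable}, \ref{pseudorandomstars} with high probability.

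We build the subdivision iteratively, maintaining a subgraph $S$ initialized as the edgeless graph on $V_1\cup\cdots\cup V_n$. The key trick is to dispatch most pairs with length-one connections using the matching $M$ from \ref{pseudorandomstars}: whenever $M$ contains an edge $uv$ with $u\in V_i$ and $v\in V_j$, we take the path $b_i-u-v-b_j$ as the $b_ib_j$-path of our subdivision. Since $M$ is a matching these internal vertices are distinct across pairs, and crucially no vertex outside $V_1\cup\cdots\cup V_n$ is consumed. This handles all but at most $\gamma n^2$ ``bad'' pairs.

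For each bad pair $(i,j)$ we pick ``free'' vertices $u\in V_i$ and $v\in V_j$ (those not yet incident to any edge of $S$), then apply \Cref{lemma:connecting} to $S$ to obtain a $uv$-path of length $O(\log \ell/\log n)$ whose internal vertices lie outside $V(S)$ and such that $S$ augmented with this path remains $(D,m)$-extendable. The number of free vertices in each $V_i$ equals precisely the number of bad pairs involving $i$ (namely $|V_i|$ minus the number of $M$-edges at $V_i$), so suitable endpoints can always be preassigned. The degree condition $d_S(u),d_S(v)\le D/2$ of \Cref{lemma:connecting} is trivial since $d_S$ never exceeds $2$ throughout the process.

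The main obstacle is the tight vertex-accounting: we must maintain $|V(S)|\le |V(G')|-10Dm=(1-o(1))(n-1)\ell$ throughout so that \Cref{lemma:connecting} remains applicable. At the end, $|V(S)|$ is at most $n(n-1)$ plus $O(\gamma n^2 L)$ new internal vertices, where $L=O(\log \ell/\log n)$ bounds the connecting-path length; choosing $\gamma$ sufficiently small in terms of $\varepsilon$ and $L$ makes this fit comfortably under $(1+\varepsilon)(n-1)n$. This is precisely the regime where \ref{pseudorandomstars} is indispensable: without it we would need $\binom{n}{2}$ connecting paths of length $\ge 3$ and hence $\gtrsim n^2$ new internal vertices, exceeding the $\approx \varepsilon n^2$ slack when $\ell$ is close to $(1+\varepsilon)n$. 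The complementary range $\ell>\gamma^3n^2/48$, where \ref{pseudorandomstars} does not apply, is easier: the vertex budget is ample and \Cref{lemma:connecting} applies directly to every pair.
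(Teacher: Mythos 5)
Your proof follows essentially the same route as the paper's: fix a fiber to host the $n$ branch vertices, observe that their neighborhoods form disjoint transversals of the remaining lift $\U^\ell(K_{n-1})$, invoke Lemma~\ref{lem:winningconditions} to get joinedness, extendability, and (when $\ell\lesssim n^2$) the near-perfect matching $M$ between the $V_i$'s, dispatch the good pairs via $M$-edges, and finish the remaining $\le \gamma n^2$ pairs by repeated applications of Lemma~\ref{lemma:connecting}. The only cosmetic difference is that you preassign the endpoints of the remaining connecting paths by greedily picking ``free'' vertices in each $V_i$, whereas the paper packages the same bookkeeping into an auxiliary matching $M'$ vertex-disjoint from $M$; the vertex-budget check for the large-$\ell$ regime is stated a bit loosely in your sketch but the computation is routine and matches the paper's.
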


\begin{proof}
 Let $W\subseteq V(G)$ be a fiber of $G$. We reveal all the edges incident to $W$. Let $V_1,\ldots, V_n$ be the neighborhoods of $n$ arbitrary but distinct vertices of $W$. Our plan (as explained in the paragraph after Lemma~\ref{lemma:connecting}) is to embed the branch vertices of our subdivision of $K_n$ into these $n$ vertices and use their neighborhoods to form the connections. In particular, in order to find a $K_n$-subdivision, we need to find a collection of vertex-disjoint paths, one between each pair $V_i$ and $V_j$ for $i<j$. 

Let $\gamma=\eps/11$ and let $S$ denote the empty graph on vertex-set $V_1\cup\ldots\cup V_n$. 
Note that $G'=G[V(G)\setminus W]\sim \U^\ell(K_{n-1})$ and that none of its edges have been revealed so far. Note also that each $V_i$ is a transversal of $G'$. So \Cref{lem:winningconditions} applies and by its parts \ref{joined} and \ref{extendable} we may assume that $G'$ is $m$-joined and that $S$ is $(D,m)$-extendable in $G'$ for $(D,m):=(n^{0.99}, 5\ell\log n)$. Furthermore, if $\ell\le \gamma^3n^2/48$, let $M$ be the matching guaranteed by part \ref{pseudorandomstars} of the lemma, otherwise set $M=\emptyset$.

The edges in $M$ already provide us with a part of the desired collection of paths. On a high level, if $\ell$ is small this partial collection connects almost all the pairs and we only need to connect the remaining at most $\gamma n^2$ pairs $V_i,V_j$. If $\ell$ is big the connection might even be empty, but we have much more space.

Let us now fix an auxiliary matching $M'$ chosen greedily so that $M$ and $M'$ are vertex disjoint and $M \cup M'$ contains an edge between every pair of distinct sets $V_i$ and $V_j$. We note that we do not insist that edges of $M'$ belong to $G'$, rather $M'$ is providing us a template for which pairs of vertices we wish to connect by the paths in our desired collection. Note, also that one can choose such an $M'$ greedily since $|V_i|=n-1$ for all $i$. 
Since $M$ connects all but $\gamma n^2$ pairs $V_i,V_j$ provided $\ell\le \gamma^3n^{2}/48$ we know $M'$ contains at most $\gamma n^2$ edges in this case and it contains $\binom{n}{2}$ edges otherwise.
The following claim makes the above-described strategy precise.
\begin{claim*}
    For each edge $xy=e\in M'$, we may find an $xy$-path $P_e$ in $G'$ of length at most \linebreak $3\lceil \log (2m)/\log (D-1)\rceil$ whose internal vertices are disjoint from $V(S)$, and furthermore, $V(P_e)\cap V(P_{f})=\emptyset$ whenever $e\neq f$.
\end{claim*}
\begin{proof}
Let $e_1,e_2,\ldots, e_t$ be the edges in $M'$. Suppose we are given a collection of paths $P_{e_1},\ldots, P_{e_{i-1}}$ with the desired properties as well as with an additional one that $S\cup P_{e_1} \ldots \cup P_{e_{i-1}}$ is a $(D,m)$-extendable subgraph of $G'$. Note that this assertion holds for $i=1$ with the empty collection of paths, since $S$ is $(D,m)$-extendable. Hence, showing we can extend the collection while maintaining the desired properties will complete the proof.

Suppose first that $\ell\leq \gamma^3n^{2}/48.$
Then, we have $i\leq |M'|\le \gamma n^2$ and $\lceil\log (2m)/\log (D-1)\rceil\leq 3$. Therefore, $S\cup P_{e_1} \cup\ldots \cup P_{e_{i-1}}$ has order at most $$
n^2 + 9\gamma n^2  \leq |V(G')| - 10Dm,
$$ 
since $|V(G')|=\ell(n-1)\geq (1+\eps)n(n-1)$, and $10Dm=50n^{0.99}\log n \cdot \ell\le \gamma\ell n,$ for $n$ large enough. Since vertices in $e_i$ belong to $S$ they do not appear in any of $P_{e_1},\ldots, P_{e_{i-1}}$ so we may apply \Cref{lemma:connecting} to find a desired $P_{e_{i}}$.

If on the other hand $\ell>\gamma^3n^{2}/48$,
then $S\cup P_{e_1} \cup\ldots \cup P_{e_{i-1}}$ has order at most 
$$
n^2 + 3n^2 \lceil \log (2m)/\log (D-1)\rceil\leq n^2\log\ell\leq |V(G')| - 10Dm.
$$ 
So once again, \Cref{lemma:connecting} provides us with our desired $P_{e_{i}}$.
\end{proof}
The union of the paths provided by the claim and our branch vertices gives the desired subdivision of $K_n$.
\end{proof}

\section{Finding subdivisions in small lifts}\label{sec:smalll}
In this section, we prove \Cref{thm:smalll}. We begin with the following more explicit expansion lemma for relatively large sets into arbitrary linear-sized sets. 
\begin{lemma}\label{lem:expansion-explicit}
    Let $0<\beta, \gamma\le \frac12$ and $2\le \ell \le  \beta^2\gamma^2n.$ Let $G \sim \mathcal{U}(K_{n}^\ell)$. Given a partial transversal $U$ of size $|U|\ge  2\beta\sqrt{n\ell}$ and a subset $V \subseteq V(G)$ of size $|V| \ge 5\gamma n\ell$, we have that $|N(U) \cap V| < \gamma n \ell$ with probability at most
    $$ e^{-\gamma^3n|U|/4}.$$
\end{lemma}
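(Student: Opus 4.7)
The strategy is a standard encoding/union-bound argument. First observe that if $|N(U) \cap V| < \gamma n \ell$, one may pick a set $Y \subseteq V \setminus N(U)$ with $|Y| \ge |V| - \gamma n \ell \ge 4\gamma n\ell$. A union bound over all possible such $Y$ yields
$$
\Pr[|N(U) \cap V| < \gamma n\ell] \le \binom{|V|}{\lceil \gamma n\ell \rceil} \cdot \max_{Y} \Pr[N(U) \cap Y = \emptyset],
$$
where the bound $|V| \le n\ell$ lets us bound the binomial factor by $(e/\gamma)^{\gamma n\ell}$.

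For a fixed $Y$, I would estimate $\Pr[N(U) \cap Y = \emptyset]$ by exploiting that the random perfect matchings between distinct pairs of fibers of $G$ are independent. For each unordered pair $\{f,f'\}$ of distinct fibers, let $F_{f,f'}$ denote the bipartite graph between the fibers $f$ and $f'$ with edge set $(U \cap f) \times (Y \cap f') \,\cup\, (U \cap f') \times (Y \cap f)$. Applying \Cref{lem: randomness} to each pair and taking the product of the bounds yields
$$
\Pr[N(U) \cap Y = \emptyset] \le \exp\!\left(-\frac{1}{2\ell}\sum_{f < f'} e(F_{f,f'})\right).
$$
The key computation, which is the only place where the partial transversal hypothesis on $U$ is used, is the lower bound
$$
\sum_{f < f'} e(F_{f,f'}) \;=\; |U|\cdot|Y| - \sum_f |U \cap f|\cdot|Y \cap f| \;\ge\; |U|(|Y| - \ell),
$$
obtained from $|U \cap f| \le 1$, which gives $\sum_f |U \cap f| \cdot |Y \cap f| \le \sum_{f:\,|U\cap f|=1}\ell = |U|\ell$. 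Combined with $|Y| \ge 4\gamma n \ell$, this yields $\Pr[N(U) \cap Y = \emptyset] \le e^{-|U|(4\gamma n - 1)/2}$.

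Putting the two estimates together reduces the lemma to the numerical inequality
$$
\gamma n \ell \log(e/\gamma) + \frac{\gamma^3 n |U|}{4} \;\le\; \frac{|U|(4\gamma n - 1)}{2}.
$$
The assumption $\ell \le \beta^2 \gamma^2 n$ together with $|U| \ge 2\beta\sqrt{n\ell}$ yields the useful lower bound $|U| \ge 2\ell/\gamma$. Substituting this into the right-hand side and using $\gamma \le \tfrac{1}{2}$ leaves comfortable slack to verify the inequality. The main (relatively mild) obstacle will be to keep the algebra clean enough to see that the constants on both sides actually balance; the conceptual content is concentrated in the fiber-pair analysis, where the partial transversal property is leveraged to turn the random-matching bound from \Cref{lem: randomness} into an estimate linear in $|U|$.
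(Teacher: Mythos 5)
Your proposal is correct, and it takes a genuinely different route from the paper. The paper partitions $V$ into its per-fiber intersections, first pruning the small and $U$-intersecting fibers; it then shows that for each surviving fiber-piece $V_i$ (which has size $\ge 2\gamma\ell$), the probability that fewer than half of $V_i$ lies in $N(U)$ is at most $2^{\ell}(1-\gamma)^{|U|}\le e^{-\gamma|U|/2}$, using only the elementary fact that each $u\in U$ independently lands in a fixed subset of $V_i$ with probability at least its density. It then aggregates across the $t\ge 2\gamma n$ fibers, arguing that with probability $\ge 1-2^{t}e^{-\gamma^2 t|U|/4}$ at most a $\gamma/2$-fraction of the pieces are bad, which forces $|N(U)\cap V|\ge\gamma n\ell$. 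Your argument instead does a single global union bound over candidate ``witness'' sets $Y\subseteq V\setminus N(U)$ of size roughly $|V|-\gamma n\ell$ (at a cost of $(e/\gamma)^{\gamma n\ell}$), and for a fixed $Y$ controls $\Pr[N(U)\cap Y=\emptyset]$ by invoking \Cref{lem: randomness} pair of fibers at a time, exactly as in the proof of \Cref{lem: joined}. The edge-count identity $\sum_{f<f'}e(F_{f,f'})=|U||Y|-\sum_f|U\cap f||Y\cap f|\ge|U|(|Y|-\ell)$, which crucially exploits the partial transversal property of $U$, plays the role that the per-fiber density argument plays in the paper; I checked that after substituting $|Y|\ge 4\gamma n\ell-1$ and $|U|\ge 2\ell/\gamma$, the final numerical inequality $\gamma n\ell\log(e/\gamma)+\gamma^3n|U|/4\le|U|(2\gamma n-1)$ holds with a wide margin for $\gamma\le\tfrac12$ (once one notes $\gamma n\ge 16$). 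What your approach buys is a cleaner conceptual structure that reuses an already-established lemma and unifies this estimate with \Cref{lem: joined}; what the paper's buys is avoiding the $(e/\gamma)^{\gamma n\ell}$ entropy term altogether, giving slightly more slack in the exponents and avoiding any explicit numerical balancing against the union bound.
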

\begin{proof}
     We remove from $V$ all vertices belonging to fibers which intersect $V$ in less than $\ceil{2\gamma \ell}$ vertices or intersect $U$. Note that this removes at most $n\cdot 2\gamma \ell+2\beta\sqrt{n\ell}\cdot \ell\le2.5\gamma n\ell \le|V|/2$ vertices from $V$. Denote by $V'$ the set of remaining vertices in $V$, so $|V'|\ge |V|/2$. Let $V_1,\ldots, V_t$ denote the non-empty intersections of $V'$ with the fibers. So, $|V_i| \ge \ceil{2\gamma \ell}$ for all $i\in[t]$. Note also that $|V'|\ge |V|/2> 2\gamma n \ell \implies t \ge 2\gamma n$.  

    By a union bound, the probability that there are at least $|V_i|/2\ge \gamma \ell$ vertices in $V_i$ which do not have a neighbor in $U$ is at most 
    $$2^{|V_i|} \cdot \left(1-\gamma\right)^{|U|}\le 2^{\ell} \cdot e^{-\gamma|U|}\le e^{-\gamma|U|/2},$$
    where in the first inequality we are using the fact that whether a given vertex in $U$ has a neighbor in a specified subset of $V_i$ is independent among all such vertices since all vertices of $U$ belong to different fibers (disjoint from $V_i$). 

    Hence, the probability that there are at least $\gamma t/2$ sets $V_i$ for which $|N(U) \cap V_i|< |V_i|/2$ is at most 
    $$2^{t}\cdot e^{-\gamma^2 t|U|/4}\le e^{-\gamma^2 t|U|/8}\le e^{-\gamma^3 n|U|/4}$$ since different $V_i$'s belong to different fibers (disjoint from those containing $U$) so these events are mutually independent. Otherwise, since any $\gamma t/2$ sets $V_i$ can account for at most $\gamma n \ell/2 \le |V|/10$ vertices of $V$, we have $|N(U) \cap V'| \ge |V|/5 \ge \gamma n \ell$.
\end{proof}

We are now ready to prove the following precise version of the lower bound from \Cref{thm:smalll}. The proof is slightly more involved compared to that of Theorem~\ref{thm:maintheorem}, a key difference being that a significant portion of the vertex-disjoint paths we find between the branch vertices will be of length two (in the proof of Theorem~\ref{thm:maintheorem} all paths were of length at least three). Another important difference is the choice of the location of the branch vertices. In \Cref{thm:maintheorem} we embed the branch vertices in a single fiber. This is completely infeasible here since such vertices by construction never have any common neighbors so can not be joined by paths of length two (which is easily seen to be required to get the precise result we obtain). Instead, we will embed the branch vertices as a partial transversal, although we will at several later stages of the argument remove a few potential outliers. We show that one can connect most of the pairs of our would-be branch vertices with paths of length two through distinct vertices giving us an almost-clique subdivision. Finally, we use the extendability method to connect the remaining pairs through a small collection of fibers we set aside at the beginning for this purpose. 

\begin{theorem}\label{thm:upper-bnd-small-ell-proof}
    Let $0<\eps<2^{-12}$. For $\ell \le \eps^{16}n$, a graph sampled according to $\mathcal{U}(K_{n}^\ell)$ with high probability as $n \to \infty$ contains a topological clique of order at least $(1-43\eps)\sqrt{\frac{2n\ell}{1-1/\ell}}$.
\end{theorem}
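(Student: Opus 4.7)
The plan is to produce a subdivision of $K_k$ with $k := \lceil (1-43\eps)\sqrt{2n\ell/(1-1/\ell)}\rceil$, whose branch vertices form a random partial transversal, and where the vast majority of the $\binom{k}{2}$ connecting paths have length at most $2$. I would begin by setting aside a small \emph{reserve} consisting of $\lfloor \eps n \rfloor$ fibers, whose union $R$ induces a subgraph $G[R]\sim\U^{(\ell)}(K_{\lfloor\eps n\rfloor})$ satisfying (with high probability) the joined and extendability hypotheses needed to apply \Cref{lemma:connecting} inside $G[R]$, via suitable applications of \Cref{lem: joined} and \Cref{lem:expansion2}. Within the remaining $(1-\eps)n$ fibers, I would pick $k$ fibers and select one vertex uniformly at random from each to form the branch set $B$.

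After revealing the matchings incident to $B$, the independence of matchings between distinct pairs of fibers gives the following: for every pair $\{u,v\}\in\binom{B}{2}$ with $u\in F_i$, $v\in F_j$, the probability $u\sim v$ is exactly $1/\ell$, and conditional on non-adjacency, the indicator that there is a common neighbor of $u$ and $v$ in $F_m$ is Bernoulli of mean $1/\ell$, independently across $m\ne i,j$. Hence roughly $(1/\ell)\binom{k}{2}$ pairs are already joined by direct edges, while the remaining $(1-1/\ell)\binom{k}{2}\approx(1-O(\eps))n\ell$ non-adjacent pairs each admit a number of common neighbors tightly concentrated around $(n-2)/\ell$.

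The heart of the argument is to assign to almost every non-adjacent pair a \emph{distinct} common neighbor to serve as the middle vertex of a length-$2$ path. I would formulate this as a matching problem in the auxiliary bipartite graph $H$ with left class $L$ equal to the non-adjacent pairs and right class equal to the non-branch, non-reserve vertices of $G$, placing an edge whenever the right vertex is a common neighbor of the corresponding pair. The critical tightness is that $|L|\approx(1-O(\eps))n\ell$ is only slightly less than the number of available right vertices, so any $L$-saturating matching must consume nearly all of them. To extract an almost-$L$-saturating matching I would exploit the pseudorandomness of the matchings between non-branch fibers: each right vertex $w$ has $d_B(w)\sim\mathrm{Bin}(\cdot,1/\ell)$ concentrated around $k/\ell$, yielding sharp concentration on $d_H(w)=\binom{d_B(w)}{2}$. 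A random greedy argument (processing pairs in a uniformly random order and picking an unused common neighbor whenever one exists) then matches all but $O(\eps k^2)$ pairs, with the $43\eps$ slack baked into $k$ absorbing the accumulated fluctuations and various minor losses (outlier fibers or branch vertices with anomalously low common-neighbor counts may be discarded along the way).

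Finally, for each of the $O(\eps k^2)=O(\eps n\ell)$ still-unmatched pairs I would find a short connecting path routed through $R$ by repeated applications of \Cref{lemma:connecting} inside $G[R]$; each such path has $O(1)$ internal vertices and extendability is preserved across applications, so the total vertex consumption comfortably fits within $|V(G[R])|=\eps n\ell$. The main obstacle is the near-saturation matching step: with only $O(\eps)$ fractional slack between $|L|$ and the number of available right vertices, no naive Hall-type argument can succeed, and one must couple sharp concentration for the $d_B(w)$ with a delicate random-greedy (or nibble-type) analysis to ensure that the matching process does not strand more than $O(\eps k^2)$ pairs.
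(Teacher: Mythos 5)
Your overall architecture is essentially the paper's: connect most pairs of branch vertices by paths of length at most two through fresh intermediate vertices in the ``main'' fibers, and handle the few remaining pairs through a reserved set of $\lfloor\eps n\rfloor$ fibers using the $m$-joined/$(D,m)$-extendability machinery and \Cref{lemma:connecting}. Where you diverge, and where the gap lies, is the step you yourself flag as the main obstacle: producing the near-perfect assignment of distinct middle vertices to non-adjacent pairs. You propose a random greedy (or nibble) process and invoke concentration of $d_B(w)$, but you give no analysis of how the process avoids stranding significantly more than the tolerable number of pairs, and this is genuinely nontrivial since the history dependence of a greedy matching in a bipartite graph with only $O(\eps)$ fractional slack is exactly the regime where naive concentration fails. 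The paper sidesteps this entirely: instead of analyzing a process, it proves (via \Cref{lem:expansion-explicit} and a union bound over \emph{all} collections $P$ of $\ge\eps^2\binom{b}{2}$ unsettled pairs and \emph{all} sets $S$ of $\ge\eps n\ell$ unused vertices) that every such $(P,S)$ admits a pair with a common neighbor in $S$. That structural statement makes the greedy process trivially run until only $\eps^2\binom{b}{2}$ pairs remain, with no martingale or nibble analysis needed. You would need to supply something equivalent to make your matching step rigorous.

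Two further issues. First, your bookkeeping is off: you fix $k=\lceil(1-43\eps)\sqrt{2n\ell/(1-1/\ell)}\rceil$ branch vertices at the outset, but then plan to discard ``outlier'' branch vertices along the way, which would leave you with \emph{fewer} than $k$ at the end. The paper starts with a larger set of size $b\approx(1-2\eps)\sqrt{2n\ell/(1-1/\ell)}$, bounds the number of discarded vertices by $40\eps b$ (using the bound $\eps^2\binom{b}{2}$ on unsettled pairs), and the $-43\eps$ in the final count is exactly the accumulated loss from $b$ to the final clique size. Second, ``route the unmatched pairs through $R$ via \Cref{lemma:connecting} inside $G[R]$'' skips the issue that the branch vertices themselves are not in $R$, so they cannot be endpoints in the extendable subgraph of $G[R]$. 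The paper handles this by first growing vertex-disjoint stars from the surviving branch vertices into the reserve fibers and then connecting star leaves inside the reserve; without such a step your routing is not well-defined. You also need the number of pairs routed through $R$ to be $O(\eps^2 n\ell)$ rather than $O(\eps n\ell)$ so that the paths (each with several internal vertices) and the star leaves together fit comfortably inside $|R|\approx\eps n\ell$; this is precisely what the ``each surviving branch vertex misses at most $\eps b/40$ connections'' cleanup buys and what your $O(\eps k^2)$ bound does not.
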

\begin{proof}
    Throughout the proof, we will assume that $n$ is large enough for a variety of estimates involving it to hold. The theorem trivially holds for $\ell=1$ so let us assume $\ell \ge 2$.
    Let us fix an arbitrary partial transversal $B$ of size $b=\ceil{(1-2\eps)\sqrt{\frac{2n\ell}{1-1/\ell}}}$ and partition the fibers into two sets $F_1$ and $F_2$ such that $|F_1|=\ceil{(1-\eps)n}, |F_2|=\floor{\eps n}$ and $B$ is contained within fibers in $F_1$.

    Since $B$ is a partial transversal the number of edges $B$ induces follows $\Bin\left({\binom{b}{2},\frac1{\ell}}\right)$ and so is with high probability at least $(1-\eps^2)\binom{b}{2}/{\ell}$. We will need another property of edges between fibers in $F_1$ which holds with high probability as guaranteed by the following claim.

    \begin{claim*}
        With high probability for any collection $P$ of $\ceil{\eps^2 \binom{b}{2}}$ pairs of vertices in $B$ and any set $S$ of $\ceil{\eps n \ell}$ vertices from fibers in $F_1$ there is a pair in $P$ with a common neighbor in $S \setminus B$.
    \end{claim*}
    \begin{proof}
        Let us first fix some $P$ and $S$ as in the statement of the claim and estimate the probability that they fail the condition stated in the claim. 
        It will be convenient to work with an auxiliary graph $H$ with vertex set $B$ and edges being pairs in $P$. 
        Let us remove from $H$ repeatedly any vertex with degree less than half of the average. Since this does not reduce the average degree we are left with a subgraph with minimum degree at least $\eps^2(b-1)/2$. Let us now choose a bipartition of the remaining vertices $B_1 \sqcup B_2$ maximizing the number of edges across. Let $H'\subseteq H$ be the bipartite subgraph consisting only of these edges. In particular, we have minimum degree in $H'$ being at least $\eps^2(b-1)/4$. Suppose $S$ contains at most as many vertices in the fibers containing the vertices of $B_1$ as it does in the fibers containing the vertices in $B_2$.  
        Let us remove from $S$ any vertex which belongs to the same fiber as $B_1$. This leaves at least $\eps n\ell/2$ vertices in $S$.  
        Now for every vertex $v \in B_1$ let $P_v$ be a fixed set of $\ceil{\eps^2(b-1)/4}\ge 2\eps^3 \sqrt{n\ell}$ of its neighbors in $H'$. 
        
        Let us now reveal all the edges between the fibers containing $B_2 \cup S$. Since each $P_v$ is also a partial transversal and $S$ is a set of vertices of size at least $\eps n \ell/2$ we have by \Cref{lem:expansion-explicit} (applied with $\gamma:=\eps/10$ and $\beta:=\eps^3$) that $|N_G(P_v) \cap S |\ge \eps n \ell/10\ge 2\eps^{2} n \ell$ with probability at least $1-e^{-\eps^{7} n\sqrt{n \ell}}$. So by a union bound, we can ensure this happens with high probability for all $v \in B_1$ (without revealing any edges incident to the fibers containing $B_1$). 
        Let us fix an outcome for which this does indeed occur.

        Note that if there is an edge in $G$ between $v$ and $S_v:=N_G(P_v) \cap (S \setminus B)$ this gives a vertex in $S \setminus B$ adjacent to both vertices making an edge in $H'$, so a pair in $P$ and the statement of the claim holds for our choice of $P$ and $S$. Note that $|S_v|=|N_G(P_v) \cap (S \setminus B)|\ge |N_G(P_v) \cap S|-|B| \ge 2\eps^2 n \ell-b \ge \eps^2 n \ell.$
        By splitting $S_v$ according to fibers and applying \Cref{lem: randomness} to each part, the probability that $v$ does not have a neighbor in $S_v$ is at most $e^{-|S_v|/(2\ell)}\le e^{-\eps^2 n/2}$.  
        Since there are at least $2\eps^3 \sqrt{n \ell}$ vertices in $B_1$ (this being a lower bound on the minimum degree in $H'$) and all of them belong to different fibers, making these events mutually independent, the chance this happens for every vertex in $B_1$ is at most $e^{-\eps^{5}n \sqrt{n\ell}}$.

        This shows that the statement of the claim fails for a fixed choice of $P$ and $S$ with probability at most $2e^{-\eps^{7} n\sqrt{n \ell}}.$ 
        There are at most $2^{n\ell}$ choices for both $P$ and $S$.
        So by taking a union bound, we get the probability that the claim fails is at most 
        $$2^{2n\ell}\cdot 2e^{-\eps^{7} n\sqrt{n \ell}}=o(1),$$
        as desired.        
    \end{proof}

    Let us now reveal all edges between fibers in $F_1$ and assume our outcome satisfies both the claim and that $B$ induces at least $(1-\eps^2)\binom{b}{2}/{\ell}$ edges. 
    Using the claim statement repeatedly we can join a new pair of vertices in $B$ (not already joined by an edge) by a path of length two using a unique vertex so long as there are at least $\eps^2\binom{b}{2}$ pairs and at least $\eps n \ell$ vertices from fibers in $F_1$ left. Since $(1-\eps^2)\binom{b}{2}/{\ell}+(1-2\eps)\ell n \ge (1-\eps^2)\binom{b}{2}$ the former condition bottlenecks first. So, we are able to connect all but $\eps^2\binom{b}{2}$ of the pairs of vertices in $B$ by paths of length at most two using only edges between vertices of $B$ or vertices belonging to fibers in $F_1$.
    
    Let us remove from $B$ any vertex which is still missing more than $\eps b/40$ paths. Since we are missing in total at most $\eps^2 \binom{b}{2}$ paths we can remove at most $40\eps b$ vertices and are left with a partial transversal $B'\subseteq B$ with $|B'|=b'\ge (1-40\eps)b$ and a subdivision contained within fibers in $F_1$, with branch vertices being $B'$ and using only paths of length at most two. In addition, every vertex of $B'$ is joined in the current subdivision to all but at most $\eps b/40$ other vertices in $B'$.

    \begin{claim*}
    With high probability, there is a collection of at least $b'-\eps \sqrt{n \ell}$ vertex disjoint stars, each of size $\ceil{\eps b/40}$, centered at a vertex in $B'$, with leaves within fibers in $F_2$, and their union saturating at most half
    of any fiber in $F_2$.    
    \end{claim*}
    \begin{proof}
    Let $V'$ denote an arbitrary subset of vertices in fibers $F_2$ containing $\floor{\ell/2}$ vertices from each fiber. In particular, $|V'|\ge \eps n\ell/4$.
    Let us now reveal the edges between $B$ and fibers in $F_2$. By a union bound over all subsets $U$ of $B'$ of size $\ceil{\eps\sqrt{n\ell}}$ and all subsets $V''$ of $V'$ of size $\ceil{\eps n \ell/8}$ via \Cref{lem:expansion-explicit} (applied with $\beta:=\eps/2$ and $\gamma=\eps/40$) we can ensure that with probability at least 
    $$1-2^b\cdot 2^{\eps n \ell}\cdot e^{-\eps^4 n\sqrt{n \ell}/2^{13}}=1-o(1).$$
    $|N(U) \cap V''| \ge \eps n\ell/40$ for any such $U$ and $V''$. So, in particular, there is a vertex in $U$ with at least $\sqrt{n\ell}/40\ge \ceil{\eps b/40}$ neighbors in $V''$. This allows us to repeatedly find a new star so long as they do not cover at least  $b'-\eps \sqrt{n\ell}$ vertices of $B'$ or more than $\eps n \ell/8$ vertices of $V'$. Since $b' \cdot \ceil{\eps b/40} \le \eps n\ell/8$ the former case occurs first and the claim follows.
    \end{proof}

    Let us fix an outcome in which the second claim also holds (for which we need only reveal the edges between fibers in $F_1$ and $F_2$) and let $L$ denote the union of the leaves of the stars. Note that $|L| \le b'\cdot \ceil{\eps b/40} \le \eps n\ell/8$. 
    We now reveal all edges between fibers in $F_2$, which we denote by $W_1,\ldots, W_t$ (where we know $t \ge \eps n/2$).  
    Since we have not yet revealed any information about these edges they induce a uniformly random lift. The following claim verifies that the empty graph on the vertex set $L$ is extendable in $G[W_1\cup \ldots \cup W_{t}]$, allowing us to start the iterative path embedding strategy. Let $(D,m):=(n^{0.99}, 5\ell\log n).$
     
    \begin{claim*} With high probability, the empty graph on $L$ is a $(D,m)$-extendable subgraph of $G[W_1\cup \ldots \cup W_{t}]$.
    \end{claim*}
    \begin{proof}
        \Cref{lem:expansion2} applied with $G:=G[W_1\cup \ldots \cup W_{t}]$, $\eps:=1/18$ and $V:=(W_1\cup \ldots \cup W_{t})\setminus L$, gives that with high probability, for all $U\subseteq W_1\cup \ldots \cup W_{t}$, such that $|U|\le 10\ell\log n,$ we have $$|N'(U)\cap ((W_1\cup \ldots \cup W_{t})\setminus V(L))|\geq \min\left\{\frac{ \eps n|U|}{72}, \frac{\eps n\ell}{4\cdot 18^6}\right\}-|U|\ge \frac{n}{40\cdot18^6\log n} \cdot |U|-|U|\ge n^{0.99}|U|,$$
     establishing $(D,m)$-extendability.
    \end{proof}
We further know that, $G[W_1 \cup \ldots \cup W_t]$ is with high probability $m$-joined by \Cref{lem: joined}. Let us now fix an outcome of $G[W_1 \cup \ldots \cup W_t]$ in which both this and the above (third) claim occur. We now apply \Cref{lemma:connecting} repeatedly to pairs of vertices in $L$, to obtain a collection of vertex disjoint paths of length at most $3\ceil{\frac{\log (10 \ell \log n)}{\log {(n^{0.99}-1)}}}\le 6$, with exactly one path between any pair of stars whose centers are not already joined by our subdivision. Note that we can choose the endpoints for these paths since our stars have size $\ceil{\eps b/40}$ and each center only needs to be joined to at most $\ceil{\eps b/40}$ other ones. Note also that we can keep applying \Cref{lemma:connecting} since at any point during the embedding process we only use up to $|L|+6 \cdot \eps^2b^2/2\le \eps n \ell/8 + 12\eps^2 n\ell \le \eps n\ell/4 \le |V(G[W_1 \cup \ldots \cup W_t])|-10Dm$ vertices, and that $L$ together with the already embedded paths gives a $(D,m)$-extendable subgraph (with degree of any vertex of $L$ which we still have to connect remaining $0$). These paths connect the remaining pairs of vertices among the centers of our stars giving us a subdivision of a clique with branch vertices being centers of our stars. The order of the clique equals the number of stars which is at least $b'-\eps \sqrt{n\ell}\ge (1-40\eps)b-\eps b\ge (1-43\eps)\sqrt{\frac{2n\ell}{1-1/\ell}}$, as desired.
\end{proof}

\section{Non-existence of subdivisions in small lifts}\label{sec:lwr-bnd}
We begin the section by proving the simpler of our two lower bounds, namely the lower bound part of \Cref{thm:smalll}.

\begin{theorem}\label{thm:general-lower-bnd-proof}
    Let $0<\eps$ and $2 \le \ell$, a graph sampled according to $\mathcal{U}(K_{n}^\ell)$ with high probability as $n \to \infty$ does not contain a subdivision of a clique of order at least $(1+\eps)\sqrt{\frac{2n\ell}{1-1/\ell}}$.
\end{theorem}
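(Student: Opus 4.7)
The plan is to apply the first moment method via a simple vertex-counting observation. Suppose $G$ contains a topological $K_m$ with branch set $B$ of size $m$. Since the $\binom{m}{2}$ connecting paths are internally vertex-disjoint and $|V(G) \setminus B| = n\ell - m$, at most $n\ell - m$ of these paths can have length $\ge 2$. Consequently, at least $\binom{m}{2} - (n\ell - m)$ of them must be single edges, so
\[
e(G[B]) \ge \binom{m}{2} + m - n\ell =: T.
\]

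For a fixed set $B \subseteq V(G)$ of size $m$, let $b_i$ denote $|B \cap F_i|$ where $F_i$ is the $i$-th fiber of $G$, and decompose $e(G[B]) = \sum_{i<j} X_{ij}$, where $X_{ij}$ counts the edges of the random matching between fibers $i$ and $j$ having both endpoints in $B$. The variables $X_{ij}$ are mutually independent (matchings between distinct pairs of fibers are sampled independently), satisfy $0 \le X_{ij} \le \min(b_i, b_j)$, and have $\mathbb{E}[X_{ij}] = b_i b_j / \ell$. Summing and using $\sum_i b_i^2 \ge \sum_i b_i = m$,
\[
\mathbb{E}[e(G[B])] = \frac{m^2 - \sum_i b_i^2}{2\ell} \le \frac{\binom{m}{2}}{\ell}.
\]
The value $m = \lceil (1+\varepsilon)\sqrt{2n\ell/(1-1/\ell)}\rceil$ is calibrated so that $m^2(1 - 1/\ell) \ge 2(1+\varepsilon)^2 n\ell$, which gives
\[
T - \mathbb{E}[e(G[B])] \ge \binom{m}{2}\bigl(1 - \tfrac{1}{\ell}\bigr) + m - n\ell \ge \bigl((1+\varepsilon)^2 - 1\bigr) n\ell \ge 2\varepsilon n\ell.
\]

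I would then apply Hoeffding's inequality to $\sum_{i<j} X_{ij}$, using $\sum_{i<j}\min(b_i,b_j)^2 \le \sum_{i<j} b_i b_j \le m^2/2 = O(n\ell)$, to obtain the tail bound $\Pr[e(G[B]) \ge T] \le \exp(-\Omega(\varepsilon^2 n\ell))$. A union bound over the $\binom{n\ell}{m} \le \exp(O(\sqrt{n\ell}\log(n\ell)))$ choices of $B$ then yields
\[
\Pr\bigl[\exists B : e(G[B]) \ge T\bigr] \le \exp\bigl(O(\sqrt{n\ell}\log(n\ell)) - \Omega(\varepsilon^2 n\ell)\bigr) = o(1),
\]
since $\ell \ge 2$ guarantees $n\ell \to \infty$. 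The conceptually crucial point is the counting in the first step: at the threshold $m \approx \sqrt{2n\ell}$, the lift simply does not have enough vertices to host $\binom{m}{2}$ internally vertex-disjoint length-$\ge 2$ paths, so $G[B]$ would need to contain substantially more edges than its mean, which is very unlikely. A pleasant feature of this approach is that the bound is oblivious to the type $(b_i)$ of $B$, so no separate case analysis is needed depending on whether $B$ is a partial transversal.
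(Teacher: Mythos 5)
Your proposal is correct and follows essentially the same approach as the paper: the same counting observation (a topological $K_m$ with branch set $B$ forces $e(G[B]) \ge \binom{m}{2}+m-n\ell$ because the internally vertex-disjoint paths of length at least two can only use the $n\ell-m$ vertices outside $B$), the same decomposition $e(G[B])=\sum_{i<j}X_{ij}$ with the bound $\sum_{i<j}\min(b_i,b_j)^2\le\sum_{i<j}b_ib_j\le\binom{m}{2}$, and the same union bound over $\binom{n\ell}{m}$ branch sets. The only cosmetic difference is that you invoke Hoeffding's inequality directly on the independent summands $X_{ij}$ whereas the paper quotes Azuma's inequality for the matching-exposure martingale; since the $X_{ij}$ are indeed independent across distinct pairs of fibers, these are interchangeable and yield the same $\exp(-\Omega(\eps^2 n\ell))$ tail.
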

\begin{proof}
    By reducing $\eps$ if needed we may assume $\eps<1/4$. Let $b:=\ceil{(1+\eps)\sqrt{\frac{2n\ell}{1-1/\ell}}}$. Note that in any topological clique of order $b$ in $G$ its branch vertices must span at least $\binom{b}{2}+b-n\ell$ edges. Indeed, otherwise for each of the at least $n \ell -b+1$ non-edges we need to have a unique vertex and we have at most $n\ell-b$ available.
    We will show that with high probability any set of $b$ vertices of $G$ spans less than $\binom{b}{2}+b-n\ell$ edges, hence guaranteeing there are no topological cliques of order $b$ in $G$.

    Given a set $B$ of $b$ vertices of $G$ let $B_1,\ldots, B_t$ be its non-empty intersections with different fibers sorted so that $|B_1| \le \ldots \le |B_t|$. Let $X$ denote the random variable counting the number of edges in $G[B]$. Note that since every individual edge exists with probability at most $1/\ell$ we have $\mathbb{E} X \le \binom{b}{2}\cdot \frac1{\ell}$. Let us denote by $X_{ij}$ the random variable counting the number of edges of $G$ between $B_i$ and $B_j$. So, we have $X=\sum_{i<j} X_{ij}$ and $0\le|X_{ij}|\le \min\{|B_i|,|B_j|\}$. Note also that $\sum_{i<j} |B_i|^2 \le \sum_{i<j} |B_i||B_j|\le \binom{b}{2}.$
    So by Azuma's inequality, we have 
    $$\mathbb{P}(X>\mathbb{E}X+ \eps n \ell )\le e^{{-\eps^2 n^2 \ell^2}/{\left(2\binom{b}{2}\right)}}\le e^{-\eps^2n \ell/5}.$$
    On the other hand, there are $\binom{n\ell}{b}$ choices for $B$ so by a union bound the probability that a set of $b$ vertices spanning at least $\mathbb{E}X+ \eps n \ell\le \binom{b}{2}/\ell+\eps n \ell < \binom{b}{2}+b-n\ell$ edges exists is at most
    $$\binom{n\ell}{b}\cdot e^{-\eps^2n \ell}\le e^{3\sqrt{n\ell}\log (n\ell)-\eps^2n \ell/5}=o(1),$$ as desired.
\end{proof}

The rest of the section is dedicated to the proof of \Cref{thm:lower_bound-intro}.
A key part of our proof of this result is a careful multiple exposure argument that reveals our random lift gradually across multiple stages. The most natural way of doing this is to reveal the randomness one matching between fibers at a time, as we have done in the previous sections. This unfortunately is a bit too coarse for our argument here and we will instead need to reveal edges in our lift more gradually. One may view the below argument as revealing the randomness one edge at a time. Note, however, that unlike in the case of the edge exposure martingale in the binomial random graph, in our case, the existence of edges between the same pair of fibers is far from independent. If we are told an edge exists between vertices $uv$, this immediately implies that all other edges incident to $u$ or $v$ between the same pair of fibers do not exist. On the other hand, not having an edge $uv$ implies these other edges all become (slightly) more likely to appear. The following elementary but technical lemma formalizes and extends this intuition in the way we will use in our argument.

\begin{lemma}\label{obs:stepwise revealing}
Suppose $M$ is a uniformly random perfect matching between $A$ and $B$, both of size $\ell$. Fix some disjoint $A_1,A_2\subseteq A$, $B_1,B_2\subseteq B$ and $u\in A\setminus (A_1\cup A_2)$ as well as $U\subseteq B_1$.
We denote by $\mathcal{E}$ the event that $M$ does not contain any edges between $A_1$ and $B_2$ nor between $A_2$ and $B_1$ nor between $u$ and $U$. Let $v\in B_1\setminus U$. Then,
$$
\Pr[uv\in M \mid \mathcal{E}] \geq \frac{\ell-|A_1|-|A_2|-|B_2|}{\ell^2}.
$$
\end{lemma}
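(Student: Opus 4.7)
My plan is to prove the lemma via a switching and double-counting argument on matchings, comparing the sizes of $\mathcal{M}_0 := \{M \in \mathcal{E} : uv \in M\}$ and $\mathcal{M}_1 := \mathcal{E} \setminus \mathcal{M}_0$, since $\Pr[uv \in M \mid \mathcal{E}] = |\mathcal{M}_0|/(|\mathcal{M}_0|+|\mathcal{M}_1|)$. The claimed bound is non-positive whenever $\ell \leq |A_1|+|A_2|+|B_2|$, so one may assume throughout the non-trivial regime $\ell > |A_1|+|A_2|+|B_2|$, which is also exactly what makes the switching argument below well-defined.

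The first switch is a two-edge swap: for $M \in \mathcal{M}_1$, replace $\{uM(u), M^{-1}(v)v\}$ by $\{uv, M^{-1}(v)M(u)\}$ to obtain $M'$. Since $v \in B_1 \setminus U$ and $u \notin A_1 \cup A_2$, the new matching $M'$ lies in $\mathcal{E}$ unless $M^{-1}(v) \in A_1$ and $M(u) \in B_2$; call the subset of $\mathcal{M}_1$ on which this obstruction occurs $\mathcal{M}_1^{\text{bad}}$. Reversing the swap from each $M' \in \mathcal{M}_0$ amounts to choosing some $a \in A \setminus (A_2 \cup \{u\})$ with $M'(a) \notin U$, which (using that $M'^{-1}(U) \cap A_2 = \emptyset$ since $U \subseteq B_1$) produces exactly $\ell - 1 - |U| - |A_2|$ distinct preimages, each automatically outside $\mathcal{M}_1^{\text{bad}}$. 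Hence $|\mathcal{M}_1 \setminus \mathcal{M}_1^{\text{bad}}| = (\ell - 1 - |U| - |A_2|)\,|\mathcal{M}_0|$.

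To control $\mathcal{M}_1^{\text{bad}}$ I would use a three-edge switch: for $M \in \mathcal{M}_1^{\text{bad}}$, pick $c \in A \setminus (A_1 \cup A_2 \cup \{u\})$ with $M(c) \notin B_2$ and rearrange the edges $\{uM(u), M^{-1}(v)v, cM(c)\}$ into $\{uv, M^{-1}(v)M(c), cM(u)\}$. The new matching lies in $\mathcal{M}_0$ because $c$ has no forbidden partners and $M^{-1}(v) \in A_1$ now maps outside $B_2$. Since $M^{-1}(B_2)$ has size $|B_2|$, contains $u$ and avoids $A_1$, there are always at least $\ell - |A_1| - |A_2| - |B_2|$ valid choices of $c$; conversely, reversing the three-swap from any $M' \in \mathcal{M}_0$ boils down to choosing $a \in A_1$ and $c$ from the part of $M'^{-1}(B_2)$ lying in $A \setminus (A_1 \cup A_2 \cup \{u\})$, which together provides at most $|A_1| \cdot |B_2|$ options. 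Double-counting the pairs $(M, c)$ then yields $|\mathcal{M}_1^{\text{bad}}|\,(\ell - |A_1| - |A_2| - |B_2|) \leq |A_1|\,|B_2|\,|\mathcal{M}_0|$.

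Combining the two estimates, the desired ratio is at least
\[
\frac{\ell - |A_1| - |A_2| - |B_2|}{(\ell - |U| - |A_2|)(\ell - |A_1| - |A_2| - |B_2|) + |A_1|\,|B_2|},
\]
so the lemma reduces to the elementary polynomial inequality $\ell^2 \geq (\ell - |U| - |A_2|)(\ell - |A_1| - |A_2| - |B_2|) + |A_1|\,|B_2|$. Expanding, the gap $\ell^2 - (\ell - |U| - |A_2|)(\ell - |A_1| - |A_2| - |B_2|)$ equals $\ell(|A_1|+|A_2|+|B_2|) + (|U|+|A_2|)(\ell - |A_1| - |A_2| - |B_2|)$, which is at least $(|A_1|+|A_2|+|B_2|)^2 \geq 4\,|A_1|\,|B_2| \geq |A_1|\,|B_2|$ in our regime. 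The main obstacle is the careful combinatorial bookkeeping in both switches, in particular verifying that the three-edge switch always has a valid choice of $c$ (which is exactly where the assumption $\ell > |A_1|+|A_2|+|B_2|$ is used); once this is in place the final arithmetic is routine.
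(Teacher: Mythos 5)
Your switching/double-counting argument is correct, but it is genuinely different from the paper's proof, which instead works directly with conditional probabilities: it first replaces $\mathcal{E}$ by the weaker event $\mathcal{E}'$ (no $A_1$-$B_2$ or $A_2$-$B_1$ edges), noting that $\{uv\in M\}\cap\mathcal{E}=\{uv\in M\}\cap\mathcal{E}'$ so the conditional probability can only decrease; it then reveals the sub-matching $M'$ of edges touching $A_2\cup B_2$ (which determines $\mathcal{E}'$), shows $\Pr[u,v\notin V(M')\mid\mathcal{E}']\ge(\ell-|A_1|-|A_2|-|B_2|)/\ell$, and finishes with $\Pr[uv\in M\mid u,v\notin V(M'),\mathcal{E}']\ge 1/\ell$ since $u$'s partner in $M\setminus M'$ is uniform on $B\setminus V(M')$. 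That route is shorter and avoids the bookkeeping of verifying that each swap preserves $\mathcal{E}$ and counting inverses, whereas your approach is more elementary and hands-on (no layered conditioning or Bayesian manipulations), at the cost of two separate switchings and a closing polynomial inequality. I checked the details of your argument and they hold: the two-edge swap is a bijection between $\mathcal{M}_1\setminus\mathcal{M}_1^{\text{bad}}$ and pairs $(M',a)$ with $a\in A\setminus(A_2\cup\{u\})$ and $M'(a)\notin U$, giving the exact factor $\ell-1-|A_2|-|U|$; the three-edge switch double-count is valid with the forward count $\ge\ell-|A_1|-|A_2|-|B_2|$ and backward count $\le|A_1||B_2|$ per $M'$; and the final inequality $\ell^2\ge(\ell-|U|-|A_2|)(\ell-|A_1|-|A_2|-|B_2|)+|A_1||B_2|$ follows as you write from $s\ell+t(\ell-s)\ge s^2\ge 4|A_1||B_2|$ with $s=|A_1|+|A_2|+|B_2|$, $t=|U|+|A_2|$, valid once you restrict to the non-trivial regime $\ell>s$. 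To turn the plan into a complete proof one would just need to spell out the verification that each switch stays inside $\mathcal{E}$ (which you correctly flag as the main bookkeeping), but no idea is missing.
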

\begin{proof}
Let $\mathcal{E}'$ denote the event that $M$ contains no edges between $A_1$ and $B_2$ or $A_2$ and $B_1$. Since $\mathcal{E}\subseteq \mathcal{E}'$ and $\{uv \in M\} \cap \mathcal{E}=\{uv \in M\} \cap \mathcal{E}'$, we have that $\mathbb{P}[uv \in M|\mathcal{E}]\ge \mathbb{P}[uv \in M|\mathcal{E}']$. 

Let $M'\subseteq M$ denote the subset of edges in $M$ that touch $A_2$ or $B_2$. Note that we can determine whether $\mathcal{E}'$ is satisfied just by looking at $M'$. 

Conditioning on $\mathcal{E}'$, the matching $M'$ does not cover $v$ and every vertex in $B_2$ connects to $u$ with probability at most $1/(\ell-|A_1|-|A_2|)$. Thus, $\mathbb{P}[u, v \notin V(M')|\mathcal{E}']\ge \frac{\ell-|A_1|-|A_2|-|B_2|}{\ell}$. Since $\mathcal{E}'$ is determined only by $M'$ and the neighbor of $u$ in $M\setminus M'$ is chosen uniformly from $B\setminus V(M')$, it follows that $\mathbb{P}[uv \in M|u, v\notin V(M'),\mathcal{E}']\ge \frac{1}{\ell}$. Altogether, we obtain
$$\mathbb{P}[uv \in M|\mathcal{E}']\ge \mathbb{P}[uv \in M|u, v\notin V(M'),\mathcal{E}'] \mathbb{P}[u, v \notin V(M')|\mathcal{E}']\ge \frac{\ell-|A_1|-|A_2|-|B_2|}{\ell^2}$$ implying the claim of the lemma.
\end{proof}

We are now ready to prove \Cref{thm:lower_bound-intro}. We state it here in a bit more precise form.
\begin{theorem}\label{thm:lower_bound-proof}
    There exists $C>0$ such that for $\ell\leq n-C\log n$, $G\sim \U^\ell(K_n)$ with high probability as $n \to \infty$ does not contain a subdivision of $K_n$.
\end{theorem}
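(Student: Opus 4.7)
The plan is a first moment argument: I would show that the expected number of sets $B\subseteq V(G)$ of size $n$ that could possibly be the branch set of a $K_n$-subdivision tends to $0$. The key observation is that since every vertex of $G\sim \U^\ell(K_n)$ has degree exactly $n-1$, matching the branch-vertex degree in any $K_n$-subdivision $H$, all $G$-edges incident to $B$ are forced to lie in $H$. Combined with the fact that internal vertices of $H$ have $H$-degree exactly $2$, this forces, for every pair $v_i,v_j\in B$,
$$Y_{ij}:=\mathbb{1}[v_iv_j\in E(G)]+|N_G(v_i)\cap N_G(v_j)\cap (V(G)\setminus B)|\le 1,$$
because each indicator on the right corresponds to a distinct length-$\le 2$ candidate $v_iv_j$-path in $H$, of which exactly one is allowed. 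Writing $b_f:=|B\cap F_f|$, same-fiber pairs give $Y_{ij}=0$ deterministically, whereas the number of cross-fiber pairs is $D(B):=\binom{n}{2}-\sum_f\binom{b_f}{2}=(n^2-\sum_f b_f^2)/2\ge n(n-\ell)/2\ge Cn\log n/2$, using $b_f\le \ell$.

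For a single cross-fiber pair $(v_i,v_j)$, the random variable $Y_{ij}$ is a sum of $n-1$ independent Bernoulli$(1/\ell)$ indicators, one for the direct edge and one per fiber $h\neq f_i,f_j$ for the common-neighbor candidate in $F_h$; independence here reflects the independence of matchings between distinct fiber pairs. Since $1/\ell\ge 1/(96(n-1))$, \Cref{lem: constant prob} yields $\Pr[Y_{ij}\ge 2]\ge \eta$ for an absolute constant $\eta>0$. The technical heart of the proof is to upgrade this per-pair bound to the joint statement
$$\Pr\!\left[Y_{ij}\le 1 \text{ for every cross-fiber pair of } B\right]\le (1-\eta')^{D(B)},$$
despite the matching-level dependencies between different cross-pairs (two cross-pairs sharing a vertex also share matchings of the lift). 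This would be carried out via an edge-by-edge stepwise revealing enabled by \Cref{obs:stepwise revealing}: one orders the cross-pairs and, for each, reveals the information about its $n-1$ candidate edges in turn, conditioning on all previously revealed (non-)edges. At every step, \Cref{obs:stepwise revealing} guarantees that each remaining candidate edge appears with conditional probability at least $(\ell-o(\ell))/\ell^2=(1-o(1))/\ell$, so \Cref{lem: constant prob} continues to apply conditionally on the history, yielding conditional probability at least $\eta'>0$ that the current pair satisfies $Y_{ij}\ge 2$.

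Taking a union bound over the $\binom{n\ell}{n}\le \exp(n\log n+O(n))$ choices of $B$ yields
$$\mathbb{E}\bigl[\#B \text{ satisfying the necessary condition}\bigr]\le \exp\!\left(n\log n+O(n)-\eta' Cn\log n/2\right)\longrightarrow 0$$
provided $C$ is sufficiently large in terms of $\eta'$, and Markov's inequality then gives $\Pr[G\text{ contains a } K_n\text{-subdivision}]=o(1)$.

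The hard part is the careful application of \Cref{obs:stepwise revealing} in the stepwise exposure. Throughout the revealing process, the cumulative ``blocking'' in any single matching (encoded in the parameters $|A_1|,|A_2|,|B_2|$ of the lemma, which reduce the lower bound on each conditional edge probability) must remain $o(\ell)$. Since blocking can a priori accumulate over all $D(B)=\Omega(n\log n)$ cross-pair reveals, the order in which the pairs and their candidate edges are processed must be chosen so that the blocking spreads across the $\binom{n}{2}$ matchings of the lift rather than concentrating on any one of them. The precise gap $\ell\le n-C\log n$ enters exactly by ensuring that the exponent $-\eta' Cn\log n/2$ dominates the entropy term $n\log n$ of the $B$-choices.
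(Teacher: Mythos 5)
Your high-level strategy is the same as the paper's: a first-moment argument over candidate branch sets $B$ of size $n$, using the fact that $G$ is $(n-1)$-regular to force all edges at branch vertices into the subdivision, so that each cross-fiber pair $v_i,v_j\in B$ can have at most one ``short'' connection (direct edge or common neighbor outside $B$), and then combining \Cref{lem: constant prob} with the stepwise revealing of \Cref{obs:stepwise revealing} to beat the $\exp(O(n\log n))$ union bound. The pair-counting identity $D(B)\ge n(n-\ell)/2\ge Cn\log n/2$ is exactly the entry point of $\ell\le n-C\log n$.

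However, what you flag as ``the hard part'' is precisely where the paper's entire effort lies, and the gap there is genuine. First, a small inaccuracy: $Y_{ij}$ is not a sum of $n-1$ independent Bernoulli$(1/\ell)$'s; the common-neighbor indicator in fiber $h$ has success probability $(\ell-b_h)/\ell^2$ because the common neighbor must lie outside $B$, and this can be $0$ when $b_h=\ell$. More seriously, the claimed bound $\Pr[\text{all cross-fiber pairs have } Y_{ij}\le 1]\le(1-\eta')^{D(B)}$ cannot be obtained by your scheme for general $\ell$: to examine even a single cross-fiber pair you must expose one edge in each of roughly $n$ matchings, and after $\Theta(D(B))$ pairs the blocking parameters $|A_1|+|A_2|+|B_2|$ in \Cref{obs:stepwise revealing} are no longer $o(\ell)$ --- for a matching between fibers $a$ and $b$ the blocking contributed just by the structural constraint of $B$ is already of order $b_a+b_b$, which can be $\Theta(\ell)$. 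The paper does not attempt to extract a constant conditional probability from all $D(B)$ pairs; it extracts it from a carefully constructed sub-family of only $\Theta(n\log n)$ events. Concretely, it first orders the fibers by $|X\cap F_f|$, takes a set $Y$ of exactly $C\log n$ vertices of $X$ from the lightest fibers, reveals all edges at $Y$, and uses the expansion lemma to build $t=C\log n/20$ vertex-disjoint stars centered in $Y$ with partial-transversal leaf sets $N'_j$ of size $n/20$ that avoid both $Y$'s fibers and the ``heavy'' fibers $I$; it then only tests the events ``$v_i$ has $\ge 2$ neighbors in $N'_j$'' for $i\le n/64$ and $j\le t$, in lexicographic order. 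This structure is what makes the parameters in \Cref{obs:stepwise revealing} controllable: $|A_1|\le t$ (each $N'_j$ is a partial transversal), $|A_2|<i\le n/64\le\ell/7$, and $|B_2|\le\ell/2$ (the target fiber is not in $I$), which in turn requires $\ell\ge n/9$; for smaller $\ell$ the paper falls back on a result of Drier and Linial giving no topological clique of order $3\sqrt{\ell n}\le n$. Your proposal identifies the right tools and the right obstruction, but the construction of the pair family and the exposure order that makes the blocking bounded --- together with the small-$\ell$ fallback --- is the missing core of the argument.
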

\begin{proof}
    Let $\eta>0$ be the absolute constant provided by \Cref{lem: constant prob}. We set $C=2^{12}/\eta.$ 
    If $\ell \le 9^5C \log n$, we are done by a result of Drier and Linial \cite[Theorem 4.4]{linial2006minors} which implies there is almost surely no topological clique of order $3\sqrt{\ell n}$. So let us assume $\ell \ge n/9 \ge 9^5C \log n$ in the following.
    
    Our goal is to show that with high probability 
    \begin{enumerate}[label=\textbf{(P)}]
        \item\label{itm:codegree}for every $X\subseteq V(G)$ with $|X|=n$ there exist $u,v\in X$ with at least two common neighbors in $G$ outside of $X$. 
    \end{enumerate}
    Let us first show this implies that $G$ does not contain a subdivision of $K_n$. Indeed, suppose towards a contradiction that we can find one and let $X$ denote the set of its $n$ branch vertices. By our assumption, there exist vertices $u,v\in X$ with at least two common neighbors $a,b$ outside of $X$. Since $G$ is $(n-1)$-regular, all edges incident to a branch vertex must be part of our subdivision. However, this implies paths $uav$ and $ubv$ are both part of the subdivision. Since $a,b$ are not branch vertices this gives two connecting paths between $u$ and $v$ and since their remaining degree is at most $n-3$ neither $u$ nor $v$ can be incident to $n-1$ paths, a contradiction.
    
    It remains to show \ref{itm:codegree} happens with high probability. This will follow from a simple union bound given the following upper bound on the probability that any given $X$ fails \ref{itm:codegree}.
    \begin{claim*}
        For any $X\subseteq V(G)$ with $|X|=n$, the probability $X$ fails \ref{itm:codegree} is at most $n^{-3n}$.
    \end{claim*}
    \begin{proof}
        Let $X_1,\dots,X_n$ denote the intersections of $X$ with the fibers such that $|X_1|\leq\ldots\leq |X_n|$. Let $h$ be maximal such that $|X_{n-h}|\geq \ell/2$. If no such $h$ exists then let $I$ be the empty set. Otherwise, let $I$ denote the set of fibers corresponding to $X_{n-h},\dots,X_n$ and observe that $|I|\leq 2n/\ell$. Let $k$ be minimal such that $\sum_{i=1}^k|X_i|\geq C\log n$. Note that since $|X|=n$ and $|X_n| \le \ell\leq n-C\log n$ we have $k<n$ and hence, 
        \begin{equation}\label{eq:size-bound}
            \sum_{i=1}^k|X_i|< C\log n+|X_k|\le C\log n + \frac{|X_k|+|X_{k+1}|}{2}\le C\log n +\frac{n}{2}.
        \end{equation}
         
        Let $Y$ be an arbitrary subset of $\bigcup_{i=1}^{k}X_i$ of size $C\log n$ and let $J$ be the set of fibers that $Y$ intersects. Note that $|J|\le |Y|=C\log n$. Let us reveal all the edges incident to $Y$ in $G$.
         \Cref{lem:expansion2} applied with $\eps=1/9$ and $V=V(G)$ gives that with high probability, 
        \begin{equation}\label{eq:expansion}
            |N(Y)\setminus X|\geq \min\left\{\frac{Cn\log n}{9},\frac{\ell n}{9^6}\right\}-n\geq \frac{Cn\log n}{10}+(C\log n)^2+\frac{C\log n\cdot 2n }{\ell}.
        \end{equation}
        We note that $|N(Y)\setminus X|$ only depends on the edges incident to $Y$ so to ensure the above we did not need to reveal any edges which are not incident to $Y$.
        
        Since $G$ is $(n-1)$-regular, \eqref{eq:expansion} allows us to greedily find $t:=C \log n /20$ vertex disjoint stars of size $n/20+C\log n+\frac{2n}{\ell}$ with centers $y_1,\ldots, y_t \in Y$ and leaf sets $N_1,\ldots,N_t$ disjoint from $X$.
        Note that since $N_i$ is a subset of the neighborhood of $y_i$, it does not contain two vertices from the same fiber. 
        This allows us to select $N_i'\subseteq N_i$, for each $i$, to be a set of size $n/20$ not containing any vertices from fibers in $I\cup J$. 

        Let $\bigcup_{i=k+1}^nX_i=\{v_1\ldots,v_s\}$. By \eqref{eq:size-bound}, we have $s\geq n/2-C\log n$. For $1\leq i\leq n/64$ and $1\leq j\leq t$, denote by $\mathcal{Y}_{i,j}$ the event that $v_i$ has at least $2$ neighbors in $N'_j$. Note that if any $\Y_{i,j}$ occurs, then $X$ satisfies \ref{itm:codegree} since $v_i$ and $y_j$ have two common neighbors in $N_j'$ so in particular outside of $X$. Hence, the probability that $X$ fails \ref{itm:codegree} is at most 
        \begin{equation}\label{eq:conditioning}
\Pr\Big[\bigcap_{i,j}\:\overline{\Y_{i,j}}\Big]=\prod_{i,j}\Pr\Big[\overline{\Y_{i,j}}\mid \bigcap_{(i',j')<(i,j)}\overline{\Y_{i',j'}}\Big],
        \end{equation}
        where $(i',j')<(i,j)$ refers to the lexicographic ordering, namely either $i'<i$, or $i'=i$ and $j'<j$. 
        
        Now, let us fix some $1\leq i\leq n/64$ and $1\leq j\leq t$. Next, we prove an upper bound on $\Pr\left[\overline{\Y_{i,j}}\mid\bigcap_{(i',j')<(i,j)}\overline{\Y_{i',j'}}\right].$ First, observe that none of the vertices in $\bigcup_{j=1}^{t} N_{j}' \cup\{v_1,\dots,v_s\}$ belong to fibers in $J$. Therefore, we have revealed nothing about the edges between the fibers these vertices belong to when revealing edges incident to $Y$. To prove the upper bound, we partially reveal the edges of the graph $G$. The revealed edges will be such that it is clear if $\bigcap_{(i',j')<(i,j)}\overline{\Y_{i',j'}}$ happens or not. We then prove that no matter how exactly the revealed edges look, as long as $\bigcap_{(i',j')<(i,j)}\overline{\Y_{i',j'}}$ is satisfied, $\overline{\Y_{i,j}}$ happens with probability at most $1-\eta$.
        
        Suppose we reveal all the edges between $v_{i'}$ and $N'_j$ for all $i'<i$ and $1\leq j\leq t$ as well as all the edges between $v_{i}$ and $\bigcup_{j'=1}^{j-1} N'_{j'}$. This information suffices to determine whether the event $\bigcap_{(i',j')<(i,j)}\overline{\Y_{i',j'}}$ occurs or not. In case it does, we have revealed the existence of at most $i-1$ edges incident to $N_j'$ (at most one to each of the vertices $v_1,\ldots, v_{i-1}$) and at most $j-1$ edges incident to $v_i$ (as well as non-existence of a number of edges between $\{v_1,\ldots, v_{i}\}$ and $\bigcup_{j'=1}^{t} N'_{j'}$). 
        Then, there is a set $N\subseteq N'_j$ of at least $n/20-(i+j-2)-1\geq n/32$ vertices which have not yet had any neighbor revealed, nor are they in the same fiber as $v_i$. Recall that $N \subseteq N'_j$ contains at most one vertex per fiber. Therefore, each $v\in N$ is adjacent to $v_i$ independently of whether other vertices in $N$ are adjacent to $v_i$. We denote by $A,B$ the fiber of $v_i$ respectively $v$. Let us analyze what we have revealed so far about the edges between $A$ and $B$. First of all, let $M$ denote the partial matching which has already been revealed between $A$ and $B$. Set $A'=A\setminus V(M)$ and $B'=B\setminus V(M)$. All edges in $M$ are incident to $\left(\bigcup_{j'=1}^{t} N'_{j'}\right)\cap (A\cup B)$. Set $\ell':=|A'|=|B'|\geq \ell-2t$ as well as,
        \begin{gather*}
            A_1 := A'\cap \bigcup_{j'=1}^{t} N'_{j'},\;\;\;\;\;\;
            B_1 := B'\cap \bigcup_{j'=1}^{t} N'_{j'},\;\;\;\;\;\;U := B'\cap \bigcup_{j'=1}^{j-1} N'_{j'}\\
            A_2 := A'\cap \{v_1,\ldots,v_{i-1}\},\;\;\;\;\;\;
            B_2 := F'_{v_i}\cap  \{v_1,\ldots,v_{i-1}\}.
        \end{gather*}
        Note that the remaining matching is a uniformly random perfect matching between $A'$ and $B'$ under the condition that $v\in B_1\setminus U$ and there are no edges between $A_1$ and $B_2$, $A_2$ and $B_1$ as well as $v_i$ and $U$. These are exactly the properties of $\mathcal{E}$ from \Cref{obs:stepwise revealing} with $u=v_i$. Additionally, we have $|A_1|\leq t$ since each $N'_{j'}$ intersects each fiber in at most one vertex, $|A_2|<i\le n/64 \le \ell/7$ and $|B_2|\leq \ell/2$ as $B$ is not a fiber in $I$. Therefore, no matter which exact edges we revealed, we always get by \Cref{obs:stepwise revealing} that
        $$
        \Pr\Big [v_iv\in E(G) \mid \bigcap_{(i',j')<(i,j)}\overline{\Y_{i',j'}}\Big]\geq \frac{\ell'-t-\ell/7-\ell/2}{\ell'^2}=\frac{(1-1/7-1/2-o(1))\ell}{\ell^2}\geq\frac{1}{3\ell}.
        $$
        
        By \Cref{lem: constant prob}, we get $$\Pr\Big[\overline{\Y_{i,j}}\mid\bigcap_{(i',j')<(i,j)}\overline{\Y_{i',j'}}\Big]\leq 1-\eta\leq e^{-\eta}.$$ 
        Combining with \eqref{eq:conditioning}, it follows that the probability $X$ fails \ref{itm:codegree} is at most
        $$e^{-\eta \cdot \frac{n}{64}\cdot \frac{C \log n}{20}}\le e^{-3n \log n}=n^{-3n},$$
        as desired.
    \end{proof}
    Since there are at most $(\ell n)^n \le n^{2n}$ choices for $X$ we can ensure via a union bound combined with the above claim that \ref{itm:codegree} holds with high probability for all $X$. This completes the proof as argued above.
\end{proof}

\section{Concluding remarks and open problems}\label{sec:conc-remarks}
In this paper, settling a conjecture of Drier and Linial, we determined (up to lower order terms) the threshold in terms of how large we need to take $\ell$ so that almost all $\ell$-lifts of $K_n$ contain a topological clique of order $n$. Perhaps the most immediate next question is if one can determine the threshold even more precisely.

In addition, our results determine the Haj\'os number of a typical $\ell$-lift of $K_n$ precisely when $\ell \ge (1+o(1))n$ and up to lower order terms when $\ell \ll n$. This leads to the natural question of what happens in the remaining regime when $\Omega(n) \le \ell \le (1-o(1))n,$ when we only know the answer up to a constant factor. Since our results show the answer is approximately $\sqrt{2n\ell}$ when $\ell=\Omega(n)$ and approximately $\sqrt{n\ell}$ when $\ell\ge (1-o(1))n$ there is likely another gradual transition occurring in this regime, similarly as the answer transitions from being approximately $2\sqrt{n\ell}$ for $\ell=2$ to $\sqrt{2n\ell}$ when $\ell \gg 1$. Our tools and ideas should be useful here although a much more precise version of \Cref{thm:upper-bnd-small-ell-proof} would be needed as well as an improved version of \Cref{thm:general-lower-bnd-proof}. 

Another interesting further direction is what happens for a typical lift of graphs other than complete ones. Here, Witkowski \cite{witkowski2013} determined the answer precisely provided $\ell$ is sufficiently large compared to the order of the graph being lifted. 
The ideas behind our argument extend to many other graphs and can be used to improve the lower bound requirement on $\ell$.

Let us also highlight another very interesting question raised by Drier and Linial \cite{linial2006minors}. They asked if any lift of $K_n$ can have Hadwiger number $o(n)$. Here, the Hadwiger number of a graph $G$ is the largest $m$ for which we can find a minor of $K_m$ in $G$. The only difference to the Haj\'os number is that one is looking for a minor rather than a topological minor.  The typical case is well understood (see \cite{linial2006minors}) and for $\ell \gg \log n$ one can find a clique minor of order which grows faster than just linear in $n$ showing the answers are very different between the two notions.

Let us also mention a very nice question of Fountoulakis, K\"{u}hn, and Osthus \cite{top-minors-expanding-graphs}. To state it we first need a definition. An $n$-vertex graph is said to be an $(\alpha,t)$-expander if any $X \subseteq V(G)$ with $|X| \le \alpha n/t$ satisfies $|N(X)| \ge (t+1)|X|$. They asked for which values of $\alpha,t,d$ does a $d$ regular \emph{$(\alpha,t)$-expander} necessarily contain a topological clique of order $d+1$. Note that \Cref{lem:expansion2} with $\eps=1/9$ guarantees that a typical $\ell$-lift of $K_n$ is an $(n/10,9^{6})$-expander (and with more care, one can establish even stronger expansion properties, see e.g.\ \cite{spectrum-random-lifts} for the case when $\ell$ is relatively small compared to $n$). Hence, our \Cref{thm:lower_bound-intro} gives a negative answer to this question even for such extremely good expanders. We note that the result of Drier and Linial mentioned in the introduction already establishes this for $\ell=o(n)$ and that Dragani\'{c}, Krivelevich, and Nenadov \cite{draganic2022rolling} have a very different construction of good $d$-regular expanders without a topological clique of order $d+1$.
On a more positive side, our main result \Cref{thm:maintheorem-intro} shows that for $\ell\ge (1+o(1))n$ a typical $\ell$-lift of $K_n$ does indeed satisfy this property. It would be very interesting if one could identify a simple expansion property which suffices to guarantee it and captures our result. Unfortunately, our arguments do rely in several places on the specific structure of lift graphs. In this direction, there is an approximate result, already mentioned in the introduction, due to Dragani\'{c}, Krivelevich, and Nenadov \cite{draganic2022rolling} who showed that in an $(n,d,\lambda)$-graph with $240\lambda < d \le n^{1/5}/2$, one can find a topological clique of order $d-O(\lambda)$.  

Our argument can be made algorithmic in the sense that one can devise a polynomial time algorithm which either finds a $K_n$ minor or outputs a certificate that the lift we work with is ``atypical''. Indeed, as described in Section~\ref{sec:uppr-bnd}, our algorithm for finding a $K_n$-subdivision consists of three steps, where in the first step we embed $n$ vertex disjoint stars with $n-1$ leaves, and in the second step we find a maximal matching between the vertices of the leaves of the stars connecting as many distinct pairs of stars as possible. Both of these steps can easily be performed via polynomial time algorithms. In the third step, we rely on the extendability method (\Cref{lemma:connecting}) to find several vertex-disjoint paths inside a good expander. To perform this step efficiently, we can use an algorithmic version of \Cref{lemma:connecting} from recent work of Dragani\'c, Krivelevich, and Nenadov \cite{draganic2022rolling} (see Theorem 3.5 in \cite{draganic2022rolling}).

\textbf{Acknowledgments.} We would like to thank Noga Alon for useful discussions and Michael Krivelevich for useful comments. The first author would like to gratefully acknowledge the support of the Oswald Veblen Fund. The third would like to thank Princeton University Department of Mathematics for its hospitality for the duration he was hosted as a Visiting Student Research Collaborator as this research was taking place.
\vspace{-0.3cm}


\begin{thebibliography}{10}

\bibitem{ajtai-komlos-szemeredi}
M.~Ajtai, J.~Koml\'os, and E.~Szemer\'edi.
\newblock Topological complete subgraphs in random graphs.
\newblock {\em Studia Sci. Math. Hungar.}, 14(1-3):293--297, 1979.

\bibitem{linial1999}
A.~Amit and N.~Linial.
\newblock Random graph coverings. {I}. {G}eneral theory and graph connectivity.
\newblock {\em Combinatorica}, 22(1):1--18, 2002.

\bibitem{edge-expansion}
A.~Amit and N.~Linial.
\newblock Random lifts of graphs: edge expansion.
\newblock {\em Combin. Probab. Comput.}, 15(3):317--332, 2006.

\bibitem{alpha-chi}
A.~Amit, N.~Linial, and J.~Matou{\v{s}}ek.
\newblock Random lifts of graphs: independence and chromatic number.
\newblock {\em Random Structures Algorithms}, 20(1):1--22, 2002.

\bibitem{bernshteyn2023dp}
A.~Bernshteyn, D.~Dominik, H.~Kaul, and J.~A. Mudrock.
\newblock {DP}-{C}oloring of {G}raphs from {R}andom {C}overs.
\newblock {\em arXiv preprint 2308.13742}, 2023.

\bibitem{bilu-linial}
Y.~Bilu and N.~Linial.
\newblock Lifts, discrepancy and nearly optimal spectral gap.
\newblock {\em Combinatorica}, 26(5):495--519, 2006.

\bibitem{chi-Gnp}
B.~Bollob\'as.
\newblock The chromatic number of random graphs.
\newblock {\em Combinatorica}, 8(1):49--55, 1988.

\bibitem{bollobas-catlin}
B.~Bollob\'as and P.~A. Catlin.
\newblock Topological cliques of random graphs.
\newblock {\em J. Combin. Theory Ser. B}, 30(2):224--227, 1981.

\bibitem{bollobas-thomason}
B.~Bollob\'as and A.~Thomason.
\newblock Proof of a conjecture of {M}ader, {E}rd{\H{o}}s and {H}ajnal on topological complete subgraphs.
\newblock {\em European J. Combin.}, 19(8):883--887, 1998.

\bibitem{eigenvalues}
C.~Bordenave and B.~Collins.
\newblock Eigenvalues of random lifts and polynomials of random permutation matrices.
\newblock {\em Ann. of Math. (2)}, 190(3):811--875, 2019.

\bibitem{brailovskaya2022universality}
T.~Brailovskaya and R.~van Handel.
\newblock Universality and sharp matrix concentration inequalities.
\newblock {\em arXiv preprint 2201.05142}, 2022.

\bibitem{HCs2}
K.~Burgin, P.~Chebolu, C.~Cooper, and A.~M. Frieze.
\newblock Hamilton cycles in random lifts of graphs.
\newblock {\em European J. Combin.}, 27(8):1282--1293, 2006.

\bibitem{catlin}
P.~A. Catlin.
\newblock Haj\'os' graph-coloring conjecture: variations and counterexamples.
\newblock {\em J. Combin. Theory Ser. B}, 26(2):268--274, 1979.

\bibitem{draganic2022rolling}
N.~Dragani\'{c}, M.~Krivelevich, and R.~Nenadov.
\newblock Rolling backwards can move you forward: on embedding problems in sparse expanders.
\newblock {\em Trans. Amer. Math. Soc.}, 375(7):5195--5216, 2022.

\bibitem{draganic2024hamiltonicity}
N.~Dragani{\'c}, R.~Montgomery, D.~M. Correia, A.~Pokrovskiy, and B.~Sudakov.
\newblock Hamiltonicity of expanders: optimal bounds and applications.
\newblock {\em arXiv preprint 2402.06603}, 2024.

\bibitem{linial2006minors}
Y.~Drier and N.~Linial.
\newblock Minors in lifts of graphs.
\newblock {\em Random Structures Algorithms}, 29(2):208--225, 2006.

\bibitem{Hajos-random-graphs}
P.~Erd{\H{o}}s and S.~Fajtlowicz.
\newblock On the conjecture of {H}aj\'os.
\newblock {\em Combinatorica}, 1(2):141--143, 1981.

\bibitem{top-minors-expanding-graphs}
N.~Fountoulakis, D.~K\"uhn, and D.~Osthus.
\newblock Minors in random regular graphs.
\newblock {\em Random Structures Algorithms}, 35(4):444--463, 2009.

\bibitem{hajos-summary}
J.~Fox, C.~Lee, and B.~Sudakov.
\newblock Chromatic number, clique subdivisions, and the conjectures of {H}aj{\'o}s and {E}rd{\H{o}}s-{F}ajtlowicz.
\newblock {\em Combinatorica}, 33(2):181--197, 2013.

\bibitem{distributed}
M.~G\"o\"os and J.~Suomela.
\newblock Locally checkable proofs in distributed computing.
\newblock {\em Theory Comput.}, 12:P19, 33, 2016.

\bibitem{PMs}
C.~Greenhill, S.~Janson, and A.~Ruci\'nski.
\newblock On the number of perfect matchings in random lifts.
\newblock {\em Combin. Probab. Comput.}, 19(5-6):791--817, 2010.

\bibitem{unique-games}
J.~A. Grochow and J.~Tucker-Foltz.
\newblock Computational topology and the unique games conjecture.
\newblock In {\em 34th {I}nternational {S}ymposium on {C}omputational {G}eometry}, volume~99 of {\em LIPIcs. Leibniz Int. Proc. Inform.}, pages Art. No. 43, 16. Schloss Dagstuhl. Leibniz-Zent. Inform., Wadern, 2018.

\bibitem{expander-survey}
S.~Hoory, N.~Linial, and A.~Wigderson.
\newblock Expander graphs and their applications.
\newblock {\em Bull. Amer. Math. Soc. (N.S.)}, 43(4):439--561, 2006.

\bibitem{hyde2023spanning}
J.~Hyde, N.~Morrison, A.~M{\"u}yesser, and M.~Pavez-Sign{\'e}.
\newblock Spanning trees in pseudorandom graphs via sorting networks.
\newblock {\em arXiv preprint 2311.03185}, 2023.

\bibitem{crux}
S.~Im, J.~Kim, Y.~Kim, and H.~Liu.
\newblock Crux, space constraints and subdivisions.
\newblock {\em to appear in J. Combin. Theory Ser. B, arXiv preprint 2207.06653}, 2022.

\bibitem{quantum}
F.~G. Jeronimo, T.~Mittal, R.~O'Donnell, P.~Paredes, and M.~Tulsiani.
\newblock Explicit abelian lifts and quantum {LDPC} codes.
\newblock In {\em 13th {I}nnovations in {T}heoretical {C}omputer {S}cience {C}onference}, volume 215 of {\em LIPIcs. Leibniz Int. Proc. Inform.}, pages Art. No. 88, 21. Schloss Dagstuhl. Leibniz-Zent. Inform., Wadern, 2022.

\bibitem{komlos-szemeredi}
J.~Koml\'os and E.~Szemer\'edi.
\newblock Topological cliques in graphs. {II}.
\newblock {\em Combin. Probab. Comput.}, 5(1):79--90, 1996.

\bibitem{kuhn-osthus-girth}
D.~K\"uhn and D.~Osthus.
\newblock Topological minors in graphs of large girth.
\newblock {\em J. Combin. Theory Ser. B}, 86(2):364--380, 2002.

\bibitem{improved-kuhn-osthus}
D.~K\"uhn and D.~Osthus.
\newblock Improved bounds for topological cliques in graphs of large girth.
\newblock {\em SIAM J. Discrete Math.}, 20(1):62--78, 2006.

\bibitem{PMs2}
N.~Linial and E.~Rozenman.
\newblock Random lifts of graphs: perfect matchings.
\newblock {\em Combinatorica}, 25(4):407--424, 2005.

\bibitem{mader-resolution}
H.~Liu and R.~Montgomery.
\newblock A proof of {M}ader's conjecture on large clique subdivisions in {$C_4$}-free graphs.
\newblock {\em J. Lond. Math. Soc. (2)}, 95(1):203--222, 2017.

\bibitem{spectrum-random-lifts}
E.~Lubetzky, B.~Sudakov, and V.~Vu.
\newblock Spectra of lifted {R}amanujan graphs.
\newblock {\em Adv. Math.}, 227(4):1612--1645, 2011.

\bibitem{HCs}
T.~{\L}uczak, {\L}.~Witkowski, and M.~Witkowski.
\newblock Hamilton cycles in random lifts of graphs.
\newblock {\em European J. Combin.}, 49:105--116, 2015.

\bibitem{coding}
X.~Ma and E.-H. Yang.
\newblock Constructing {LDPC} {C}odes by 2-{L}ifts.
\newblock In {\em 2007 IEEE International Symposium on Information Theory}, pages 1231--1235, 2007.

\bibitem{mader}
W.~Mader.
\newblock An extremal problem for subdivisions of {$K^-_5$}.
\newblock {\em J. Graph Theory}, 30(4):261--276, 1999.

\bibitem{interlacing}
A.~W. Marcus, D.~A. Spielman, and N.~Srivastava.
\newblock Interlacing families {I}: {B}ipartite {R}amanujan graphs of all degrees.
\newblock {\em Ann. of Math. (2)}, 182(1):307--325, 2015.

\bibitem{montgomery2019spanning}
R.~Montgomery.
\newblock Spanning trees in random graphs.
\newblock {\em Adv. Math.}, 356:106793, 92, 2019.

\bibitem{nir2021chromatic}
J.~Nir and X.~P. Gim{\'e}nez.
\newblock The chromatic number of random lifts of complete graphs.
\newblock {\em arXiv preprint 2109.13347}, 2021.

\bibitem{near-ramanujan}
R.~O'Donnell and X.~Wu.
\newblock Explicit near-fully {X}-{R}amanujan graphs.
\newblock In {\em 2020 {IEEE} 61st {A}nnual {S}ymposium on {F}oundations of {C}omputer {S}cience}, pages 1045--1056. IEEE Computer Soc., Los Alamitos, CA, 2020.

\bibitem{paul-survey}
P.~Seymour.
\newblock Hadwiger's conjecture.
\newblock In {\em Open problems in mathematics}, pages 417--437. Springer, 2016.

\bibitem{thomassen}
C.~Thomassen.
\newblock Some remarks on {H}aj\'os' conjecture.
\newblock {\em J. Combin. Theory Ser. B}, 93(1):95--105, 2005.

\bibitem{witkowski2013}
M.~Witkowski.
\newblock Random lifts of graphs are highly connected.
\newblock {\em Electron. J. Combin.}, 20(2):P23, 11, 2013.

\end{thebibliography}
\end{document}